\numberwithin{equation}{section}
\newcommand\can{_{\rm can}}
\newcommand\bry{\llc B\lrc}
\title{Open surfaces of small volume}
\author{Valery Alexeev and Wenfei Liu}
\date{January 31, 2017}
\begin{document}
\maketitle
\begin{abstract}
  We construct a surface with log terminal singularities and ample
  canonical class that has $K_X^2=1/\num{48 983}$ and a log canonical
  pair $(X,B)$ with a nonempty reduced divisor $B$ and ample $K_X+B$
  that has $(K_X+B)^2 = 1/462$. Both examples significantly improve
  known records. 
\end{abstract}
\tableofcontents

\section{Introduction}

Let $U$ be a smooth quasiprojective surface, and let $S$ be a smooth
compactification such that $D=S\setminus U$ is a normal crossing divisor.
The open surface $U$ is said to be of general type if
$K_S+D$ is big. This condition and the spaces of pluri log canonical
sections $H^0(n(K_S+D))$ for all $n\ge0$ depend only on $U$ and not on
the choice of a particular normal crossing compactification $(S,D)$. 

Since $K_S+D$ is big, the number of its sections grows quadratically:
$h^0(n(K_S+D)) \sim c n^2/2$.  After passing to the log canonical
model $(S\can, D\can)$ where $K_{S\can}+D\can$ is ample, one sees that
$c= (K_{S\can}+D\can)^2$ and it is called the \emph{volume} of the
pair $(S,D)$, equivalently the volume of $U$, and is denoted by
$\vol(K_S+D)= \vol(U)$.

Vice versa, if $(X,B)$ is a log canonical pair with reduced boundary $B$ and ample $K_X+B$, and if
$f\colon S\to X$ is its log resolution with exceptional divisors
$\{E_i\}$ then 
\[
\vol\big(S \setminus (f\inv(B)\cup E_i)\big) = 
 \vol(K_S+f_*\inv B + \sum E_i)=\vol(K_X + B)=(K_X+B)^2.
 \] 

\begin{question}
  How small could a volume of an open surface $U$ of general type be?
  Equivalently, 
  how small could $(K_X+B)^2$ be for a log canonical pair with reduced
  boundary $B$ and ample $K_X+B$?
\end{question}

The basic result in this direction is the following quite general

\begin{theorem}[Alexeev, \cite{alexeev1994boundedness-and-ksp-2}]
  Let $(X, B=\sum b_iB_i)$ be a log canonical pair with coefficients
  $b_i$ belonging to a DCC set $\cS\subset[0,1]$ (that is, a set satisfying
  descending chain condition). Then the set of squares $(K_X+B)^2$ is
  also a DCC set. In particular, it has a minimum, a positive number
  real number, rational if $\cS\subset\bQ$.
\end{theorem}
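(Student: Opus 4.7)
\emph{Strategy.} I would first pass to the log canonical model $(X\can, B\can)$, so that $K_X+B$ becomes ample without changing the volume or log canonicity of the pair. The proof then splits into three steps: (i) prove a boundedness statement — for each constant $C>0$, the pairs $(X,B)$ with $K_X+B$ ample, $(K_X+B)^2 \leq C$, and coefficients in $\cS$ form a bounded family; (ii) stratify the parameter space so that on each stratum the volume is a fixed polynomial in the coefficients $b_i$; (iii) deduce DCC of the set of volumes from componentwise monotonicity of these polynomials combined with DCC of $\cS$.

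\emph{Boundedness.} The hard step is (i), and this is where the bulk of the argument lies. One must control the indices of the log canonical surface singularities of $X$ (feasible because $\cS\subset[0,1]$ being DCC prevents the coefficients from accumulating at $1$) and then apply Matsusaka-type boundedness to the polarized surfaces $(X, K_X+B)$ of bounded degree. The usual route is: first show, via a Noether-type inequality adapted to log canonical pairs, that $K_X^2$ and the number and geometry of the singular points of $X$ are controlled by $(K_X+B)^2$; next show that the number of components of $B$ and their pairwise intersection numbers lie in finite sets; finally assemble all of this into a scheme of finite type parametrizing the marked pairs. This is the principal obstacle and the technical core of the argument.

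\emph{DCC from monotonicity.} Once boundedness is established, stratify the parameter scheme by the numerical type, i.e.\ by the intersection matrix $(B_i\cdot B_j)$, the numbers $K_X\cdot B_i$, and $K_X^2$. Only finitely many strata occur. On each stratum the volume is a fixed polynomial
\[
P(b_1,\dots,b_n) = K_X^2 + 2\sum_i b_i\, K_X\cdot B_i + \sum_{i,j} b_i b_j\, B_i\cdot B_j,
\]
with rational coefficients since intersection numbers of $\bQ$-Cartier divisors on a projective surface are rational. Ampleness of $K_X+B$ gives $\partial P/\partial b_i = 2(K_X+B)\cdot B_i \geq 0$, so $P$ is componentwise non-decreasing on the admissible coefficient region. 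If some strictly decreasing sequence of volumes existed, infinitely many of its terms would lie on a single stratum, giving coefficient tuples $b^{(m)} \in \cS^n$ with $P(b^{(m)})$ strictly decreasing; iteratively passing to non-decreasing subsequences coordinate by coordinate (using DCC of $\cS$) would produce a subsequence on which every $b_i^{(m)}$ is non-decreasing, forcing $P(b^{(m)})$ non-decreasing — a contradiction. Hence the set of volumes satisfies DCC; it has a positive minimum by ampleness, which is rational when $\cS\subset\bQ$.
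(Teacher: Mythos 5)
First, note that the paper you are working from does not prove this theorem at all: it is quoted verbatim from \cite{alexeev1994boundedness-and-ksp-2}, so the comparison must be with Alexeev's original argument. Your step (iii) — stratifying a bounded family by numerical type and deducing DCC of the values of the quadratic form $P(b_1,\dots,b_n)$ from componentwise monotonicity (which follows from $(K_X+B^{(m)})+(K_X+B^{(m')})$ being nef against the effective difference $B^{(m')}-B^{(m)}$) — is correct and is indeed the standard endgame. The problem is that you have placed the entire content of the theorem into the unproven black box (i), and the boundedness statement you invoke — pairs with coefficients in $\cS$, $K_X+B$ ample and $(K_X+B)^2\le C$ form a bounded family, with \emph{no a priori lower bound on the volume} — is essentially equivalent in depth to the DCC statement itself. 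A Matsusaka-type argument needs a bound on the Cartier index of $K_X+B$ and a positive lower bound for $(K_X+B)^2$, which is exactly what one is trying to prove; this is a genuine circularity, not a deferrable technicality. Alexeev does not prove boundedness first and DCC second: in \cite{alexeev1994boundedness-and-ksp-2} both are extracted together from the ``diagram method,'' a direct combinatorial analysis of the dual graphs of curves on a partial resolution after running an MMP, which bounds the relevant numerical data without presupposing that the family is bounded.

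There is also a concrete factual error in your justification of (i): you claim that control of the singularities is ``feasible because $\cS\subset[0,1]$ being DCC prevents the coefficients from accumulating at $1$.'' This is backwards. A DCC set forbids infinite \emph{strictly decreasing} sequences, so its accumulation points are approached from below; the set $\cS_2'=\{1-\frac1{n},\ n\in\bN\}$ used throughout this very paper is DCC and accumulates at $1$. What DCC actually buys is that the nonzero coefficients are bounded below by a positive constant (and, more generally, that various descent arguments terminate), which is what bounds the number of components of $B$ and drives the combinatorics; it does not by itself control the indices of the log canonical singularities of $X$. So while your outline correctly identifies where the difficulty lies and correctly handles the final reduction, it does not amount to a proof, and the proposed route to the key boundedness step both misstates the role of the DCC hypothesis and reverses the logical order of Alexeev's actual argument.
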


We will denote this minimum by $K^2(\cS)$. Some interesting DCC sets are
$\cS_0=\emptyset$, $\cS_1=\{1\}$, $\cS_2'=\{1-\frac1{n},\ n\in\bN\}$,
$\cS_2=\cS'_2 \cup \{1\}$. The paper
\cite{alexeev2004bounding-singular} gives an effective, computable
lower bound for $K^2(\cS)$ in terms of the set $\cS$, which is however
too small to be realistic, cf. Section~\ref{sec:k2-lower-bound} where
we spell it out for the sets $\cS_0$ and $\cS_1$. 

A version of the above definition is to look at the pairs $(X,B)$ with
nonempty reduced part of the boundary $\bry\ne0$. We will denote
the minimum in this case by $K^2_1(\cS)$. Clearly, one has
\begin{displaymath}
  K^2(\cS_2) \le K^2(\cS_1) \le K^2(\cS_0)
  \quad\text{and}\quad
  K^2(\cS_1) \le K^2_1(\cS_1).
\end{displaymath}

Some published bounds for $K^2(\cS)$ and $K_1^2(\cS)$ for the
above sets include:
\begin{enumerate}
\item $K^2(\cS_0)\le \frac3{55}$.
\item $K^2_1(\cS_1) \le \frac1{60}$.
\item $K^2_1(\cS_2) = \frac1{42^2}=\frac1{1764}$, and thus 
  $K^2_1(\cS_1) \ge\frac1{\num{1764}}$.
\item $K^2(\cS_2)\le \frac1{(42\cdot 43)^2}$ and the bound is
  conjectured to be sharp.
\end{enumerate}
The first of these bounds follows from an example of Blache
\cite{blache1995example-concerning}. The others are due to Koll\'ar
\cite{kollar1994log-surfaces, kollar2013moduli-varieties}. 

There are also many other examples of log terminal surfaces with ample
$K_X$ appearing in the literature. For example, hypersurfaces
$S(a_1,a_2,a_3,a_4)$ of the form
$x_1^{a_1}x_2 + x_2^{a_2}x_3 + x_3^{a_3}x_4 + x_4^{a_4}x_1=0$ in
weighted projective spaces $\bP(w_1,w_2,w_3,w_4)$ provide such
examples under some mild conditions on the $a_i$'s. These surfaces
were studied in \cite{orlik1977structure-weighted,
  Kouchnirenko1976polyedres-Newton, kollar2008is-there,
  hwang2012construction-singular,
  urzua2016characterization-Kollar}. The last three papers also study
surfaces $S^*(a_1,a_2,a_3,a_4)$ obtained by contracting two curves on
such surfaces, as in the last section of \cite{kollar2008is-there}.

These papers are not specifically concerned with the minimal possible
value of $K_X^2$, but certainly better bounds than $\frac3{55}$ can be
achieved. Jos\'e Ignacio Y\'a\~nez informed us that the following
example appears to achieve the minimum among the surfaces
$S(a_1,a_2,a_3,a_4)$ with $\gcd(w_1,w_3)=\gcd(w_2,w_4)=1$. 
\begin{example}[Urz\'ua-Y\'a\~nez] \label{ex:urzua} The hypersurface
  $S(2,2,4,10)$ of degree $159$ in the weighted projective space
  $\bP(49,61,37,11)$ has an ample canonical class $K_X$, cyclic
  quotient singularities, and 
  \begin{displaymath}
    K_X^2 = \frac{159 \cdot (159 - 49 - 61 - 37 - 11)^2}{49\cdot 61\cdot
      37\cdot 11} = \frac{159}{\num{1216523}} \approx
    \frac1{\num{7651}}.
  \end{displaymath}
\end{example}

\medskip

Knowing the exact bounds is important for many
applications. As explained in \cite{kollar1994log-surfaces}, a stable
limit of surfaces of general type of volume $d$ has at most $d/K^2_1(\cS_1)$
irreducible components. Thus, as a corollary of Koll\'ar's bound
$K^2_1(\cS_1) \ge\frac1{\num{1764}}$ the number of irreducible components is 
at most $\num{1764}\, d$. 
Other applications include bounds for the automorphism groups of surfaces
and surface pairs of general type, see
e.g. \cite{kollar1994log-surfaces, alexeev1994boundedness-and-ksp-2} for
more discussion.

The constant $K^2(\cS_1)$ is certainly a very fundamental global
invariant in its own right: the smallest volume of a smooth open
surface.

\medskip
The main result of this paper is the following:

\begin{theorem}
  One has $K^2_1(\cS_1)\le\frac1{462}$, and $K^2(\cS_1) = K^2(\cS_0) \le
  \frac1{\num{48 983}}$.
\end{theorem}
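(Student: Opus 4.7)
The plan is to establish both inequalities by explicit construction. Since $\cS_0 \subset \cS_1$, the inequality $K^2(\cS_1) \le K^2(\cS_0)$ is automatic, so a single log terminal surface $X$ with ample $K_X$ and $K_X^2 = 1/\num{48 983}$ gives upper bounds for both; the reverse inequality needed for the equality $K^2(\cS_1) = K^2(\cS_0)$ should follow from a general principle for surfaces (e.g.\ applying the minimal model program to a boundary component of a $\cS_1$-pair to produce a $\cS_0$-pair of no larger volume).

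For the pair bound $K^2_1(\cS_1) \le 1/462$, I would exhibit a log canonical pair $(X,B)$ with $B$ reduced and nonempty, $K_X+B$ ample, and $(K_X+B)^2 = 1/462$. The factorization $462 = 2\cdot 3\cdot 7\cdot 11$ extends the Sylvester sequence $2, 3, 7$ visible in Koll\'ar's record $1/1764 = 1/42^2$, suggesting that the example should be of Koll\'ar-type but with one extra cyclic-quotient singularity of index $11$ interacting with the boundary. Concretely, one would take $X$ to be a weighted projective plane or a partial resolution thereof, with $B$ a carefully chosen reduced curve meeting the singular points in prescribed ways; then $(K_X+B)^2$ is computed by adjunction along $B$, summing local contributions from each singularity of $(X,B)$.

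For the surface bound $K^2(\cS_0) \le 1/\num{48 983}$, note that $\num{48 983} = 11 \cdot 61 \cdot 73$ is a product of three primes, which makes a weighted hypersurface $X \subset \bP(w_1,w_2,w_3,w_4)$ in the spirit of Example~\ref{ex:urzua} (or its contraction $S^*$ of \cite{kollar2008is-there}) the natural candidate. One searches for weights and degree $d$ such that
\[
K_X^2 = \frac{d(d - w_1 - w_2 - w_3 - w_4)^2}{w_1 w_2 w_3 w_4} = \frac{1}{\num{48 983}},
\]
with $K_X$ ample and all singularities log terminal. The extra flexibility over Example~\ref{ex:urzua} presumably comes from relaxing the coprimality hypotheses $\gcd(w_1,w_3)=\gcd(w_2,w_4)=1$, or from passing to the contracted surface $S^*(a_1,a_2,a_3,a_4)$, which allows more favourable fractions.

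The main obstacle is verification rather than discovery: once plausible numerics are at hand, one must confirm simultaneously that all singularities are log terminal (resp.\ the pair is log canonical), that $K_X$ (resp.\ $K_X+B$) is ample, and that the volume equals the claimed fraction \emph{exactly}. For weighted projective hypersurfaces, log-terminality and the value of the volume are both controlled by number-theoretic conditions on the weights, while ampleness reduces to checking positivity against finitely many generators of the effective cone on the ambient space. In the pair case, the delicate step is tracking the log discrepancy of each branch of $B$ at every cyclic-quotient point and ensuring that the chosen $B$ really is a reduced divisor on $X$; this bookkeeping, rather than any single deep argument, is where the bulk of the technical work lies.
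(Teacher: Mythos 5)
Your overall strategy---prove both bounds by exhibiting explicit examples---is of course the right one, and your numerology is partly on target (the paper's pair example indeed has $(K_X+B)^2=\epsilon_1\delta_1=\frac{1}{42}\cdot\frac1{11}$). But the concrete search space you propose for the empty-boundary bound cannot work. You suggest looking among weighted hypersurfaces $X\subset\bP(w_1,w_2,w_3,w_4)$ with $K_X^2=d(d-\sum w_i)^2/(w_1w_2w_3w_4)$, or their contractions $S^*$. The paper shows this family bottoms out far above the target: the best hypersurface found is Example~\ref{ex:urzua} with $K_X^2\approx 1/\num{7651}$, and Theorem~\ref{thm:special-surfaces} proves that the minimum of $K_T^2$ over the Hwang--Keum surfaces $T(a_1,a_2,a_3,a_4)$ (which include all the $S^*(a_1,a_2,a_3,a_4)$ with the coprimality condition) is exactly $\frac1{6351}$, attained at $(2,2,4,10)$. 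The champion surface with $K_X^2=\frac1{\num{48983}}$ has Picard rank $2$ and is produced by a different mechanism: iterated blowups of four general lines in $\bP^2$ at intersection points of visible curves, governed by the weight system $(1,2,3,5)$, followed by contraction of the resulting log terminal chains (Sections~\ref{sec:method}--\ref{sec:empty-boundary}); the $\frac1{462}$ pair arises the same way, not from a weighted projective plane. So the ``verification rather than discovery'' framing understates the problem---the discovery step requires leaving the weighted-hypersurface world entirely, and the verification of ampleness is itself delicate (it fails in characteristic $2$, where an invisible $(-2)$-curve appears).

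Separately, your justification of the equality $K^2(\cS_1)=K^2(\cS_0)$ is not right. You propose running an MMP on a boundary component to convert an $\cS_1$-pair into an $\cS_0$-surface of no larger volume, but no such reduction exists in general: for Koll\'ar's pair $(\bP(3,4,5),D_{13})$ the divisor $-K_X$ is ample, so discarding the boundary destroys bigness altogether. The argument the paper uses is different: by definition $K^2(\cS_1)=\min\{K^2_1(\cS_1),\,K^2(\cS_0)\}$, Koll\'ar's lower bound gives $K^2_1(\cS_1)\ge\frac1{1764}$, and the new example gives $K^2(\cS_0)\le\frac1{\num{48983}}<\frac1{1764}$, so the minimum over $\cS_1$-pairs is necessarily attained with empty boundary.
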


Section~\ref{sec:method} explains the method which we used to find the
new examples. We restate it in Section~\ref{sec:game} as a purely
combinatorial game with weights $(w_0,w_1,w_2,w_3)$.
Section~\ref{sec:weights1} contains some easy instances of this game
for the simplest weights $(0,1,1,1)$, $(1,1,1,1)$ giving in particular
Koll\'ar's example with $(K_X+B)^2=\frac1{60}$.
Sections~\ref{sec:with-boundary} and \ref{sec:empty-boundary} contain
our champion examples for the winning weights $(1,2,3,5)$: 
surfaces with a nonempty boundary and
$(K_X+B)^2=\frac1{462}=\frac1{11\cdot 42}$, and 
surfaces without boundary and $K_X^2=\frac1{\num{48
    983}}=\frac1{11\cdot 61\cdot 73}$.

In characteristic 0 the champion surfaces have Picard number
$\rho(X)=2$ and they have 4 (resp. 3) singularities. But in
characteristic 2 the rank of the Picard group drops by 1 and there is
an additional $A_1$-singularity. The surfaces with ample $K_X$ and such
configurations of singularities would provide counterexamples to the
algebraic Montgomery-Yang problem, were they to exist in characteristic
0. We discuss this connection in Section~\ref{sec:montgomery-yang}.

Related to this, in Section~\ref{sec:pic1} we list some surfaces of
small volume that have Picard rank $\rho(X)=1$. 
In particular, we prove that for the surfaces $S^*(a_1,a_2,a_3,a_4)$
with $\gcd(w_1,w_3)=\gcd(w_2,w_4)$ the minimum is
$K_X^2=\frac1{6351}$. 

Section~\ref{sec:four-lines} explains why we restricted to the case of
only four lines in our search. 
Finally, Section~\ref{sec:k2-lower-bound} spells out the effective
(but very small) bound for $K_X^2$ which follows from
\cite{alexeev2004bounding-singular,  kollar1994log-surfaces}. 

\medskip

Further, we note that Koll\'ar's lower bound for $K^2_1(\cS_2)$ is a
combination of two inequalities. One defines two invariants of a DCC
set $\cS$:

\begin{definition}\label{def:epsilon-delta} ${}$
\begin{enumerate}
\item Let $(X,B)$ be a log canonical surface with ample $K_X+B$ and
  $\bry\ne0$. Then  
\[
\epsilon_1(X,B):=\min_{B_0\subset\bry}\{(K_X+B)B_0\}
\]
and $\epsilon_1(\cS)$ is the minimum of these numbers as $(X,B)$ go over
  all pairs with coefficients in $\cS$.
\item $\delta_1(\cS)$ is the minimum of $t>0$ such that there exists a
  log canonical pair $(X, (1-t)B_0 + \Delta)$ with
  $K_X+(1-t)B_0+\Delta\equiv 0$ such that the coefficients of $\Delta$
  are in $\cS$.
\item We also define a closely related invariant of an individual big log
  canonical  divisor:
  $\delta_1(X,B)$ is the minimum of $t$ such that $K_X+B-tB_0$ is
  not big for some $0\ne B_0\subset\bry$, or 1 if this minimum is $>1$.
\end{enumerate}  
\end{definition}

Then according to \cite{kollar1994log-surfaces} one has
$(K_X+B)^2\ge \epsilon_1(\cS)\delta_1(\cS)$ and
$\epsilon_1(\cS_2)= \delta_1(\cS_2)=\frac1{42}$. In
Section~\ref{sec:with-boundary} we give an example that shows that the
equality $\epsilon_1(\cS_1)=\frac1{42}$ also holds. As for
$\delta_1(\cS_1)$, we were not able to find better than $\frac1{13}$
with the present method, which is the same as in Koll\'ar's example
with $(K_X+B_0)^2=\frac1{60}$.

\medskip

All constructions and examples in this paper work over an
algebraically closed field of arbitrary characteristic.

\begin{acknowledgements}
  The work of the first author was partially supported by NSF under
  DMS-1603604. He would like to thank Giancarlo Urz\'ua and Jos\'e
  Ignacio Y\'a\~nez for helpful discussions.  The second author was
  supported by NSFC (No.~11501012) and by the Recruitment Program for
  Young Professionals. He would like to thank S\"onke Rollenske and
  Stephen Coughlan for helpful discussions about log canonical
  surfaces with small volumes. Especially S\"onke Rollenske helped to
  transform Koll\'ar's example to $\bP^2$ with four lines, as
  illustrated in Section~\ref{sec:weights1}.
\end{acknowledgements}

\section{The method of construction}
\label{sec:method}

We begin with four lines $L_0, L_1, L_2, L_3$ in $\bP^2$ in general
position. Let $f\colon \wX\to \bP^2$ be a sequence of blowups, each at a
point of intersection of two divisors that appeared so far:
exceptional divisors, lines, and their strict preimages. We will call
thus obtained divisors on $\wX$ the \emph{visible curves}.
We will assume that enough blowups were performed so that the
strict preimages of lines satisfy $L_k^2\le -1$. Thus, all visible
curves will have negative self-intersection. 

Let $E_i$ be the visible curves with $E_i^2\le -2$ and $C_j$ be the
visible curves with $C_j^2=-1$. Now assume that the curves $\{E_i\}$
form a \emph{log terminal configuration}, i.e., a configuration of 
exceptional curves on the  minimal resolution of a surface with log 
terminal singularities. Each connected component of the dual graph
is of type $A_n$ (i.e. a chain)
with no further restrictions on self-intersections $E_i^2$, and one of the
graphs of types $D_n$ and $E_n$, with restrictions on
self-intersections, see e.g.~\cite{alexeev1992log-canonical-surface}.

Log terminal configurations are rational and,  by Artin \cite{artin1962some-numerical}, the curves $E_i$ on $\tilde X$ can be contracted to obtain a projective surface $X$.  Let
$\pi\colon \wX\to X$ be the contraction morphism.  
(More generally, one may assume that
$\{E_i\}$ form a log canonical configuration. 
In our examples, only log terminal singularities occur.)

The surface $\wX$ is then the minimal resolution of singularities of~$X$ and one has
\begin{displaymath}
  K_\wX = \pi^* K_X + \sum a_i E_i, \quad 
  \pi^* K_X = K_\wX+\Delta := K_\wX + \sum b_i E_i.
\end{displaymath}
Here, $a_i$ are the \emph{discrepancies} and $b_i=-a_i$.  The numbers
$b_i$ satisfy the following linear system of $n$ equations in $n$
variables:
\begin{equation}\label{eq:discrepancies}
  (K_\wX + \sum b_iE_i)E_j = 0 \iff 
  \sum_i b_i E_iE_j= E_j^2+2 \quad \text{for any } j
\end{equation}
By Mumford, the matrix $(E_iE_j)$ is negative-definite, so this system
has a unique solution.  By Artin \cite{artin1962some-numerical}, all
entries of the matrix $(E_iE_j)\inv$ are $\le 0$. Since the right-hand
sides are $E_j^2+2 \le0$, it follows that $b_i\ge0$.
\cite{alexeev1992log-canonical-surface} contains some convenient
closed-form formulas for $b_i$'s, see also \cite{miyanishi2001open-algebraic}.

\begin{theorem}\label{thm:no-bdry}
  Let $C_j$ be the visible $(-1)$-curves on $\wX$. Assume that:
  \begin{enumerate}
  \item For all $C_j$ one has $K_X\pi_*(C_j) \ge 0$ (resp. $K_X\pi_*(C_j) > 0$).
  \item $K_X^2 > 0$.
  \item There exist four rational numbers $d_0,d_1,d_2,d_3$ with
  $\sum d_k=3$ such that the coefficients of $C_j$ in the formula below
  are all $d_j\le 0$ (resp. all $d_j<0$):
  \begin{displaymath}
    K_\wX + \wD := f^*(K_{\bP^2} + \sum d_k L_k) = K_\wX + \sum d_i
    E_i + \sum d_j C_j.
  \end{displaymath}
  \end{enumerate}
  Then the divisor $K_X$ is big and nef (resp. ample).  
\end{theorem}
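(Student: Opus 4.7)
The plan is to show $\pi^*K_X$ is nef by writing it as an explicit non-negative combination of the $E_i$ and $C_j$, then promote nef to big via $K_X^2>0$, and finally deduce ampleness by Nakai--Moishezon under the strict hypotheses.

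The starting identity comes from condition (3). Since $K_{\bP^2}\sim -3H$, each $L_k\sim H$, and $\sum d_k=3$, we have $K_{\bP^2}+\sum d_kL_k\sim 0$, hence $K_\wX+\wD\sim 0$. Writing $\wD=\sum_i\alpha_iE_i+\sum_j\gamma_jC_j$ and combining with the definition $\pi^*K_X=K_\wX+\sum b_iE_i$ yields
\begin{equation*}
\pi^*K_X\sim\sum_i(b_i-\alpha_i)E_i+\sum_j(-\gamma_j)C_j.
\end{equation*}
By (3), $-\gamma_j\ge 0$. To prove $b_i-\alpha_i\ge 0$, intersect with each $E_j$, using $\pi^*K_X\cdot E_j=0$ from \eqref{eq:discrepancies}; this gives the linear system $\sum_i(b_i-\alpha_i)(E_iE_j)=\sum_{j'}\gamma_{j'}(C_{j'}E_j)\le 0$. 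Since $(E_iE_j)$ is negative definite with non-positive inverse (Mumford and Artin), inversion yields $b_i-\alpha_i\ge 0$.

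Nefness then follows by checking $\pi^*K_X\cdot\tilde C\ge 0$ for each irreducible $\tilde C\subset\wX$: the case $\tilde C=E_i$ vanishes by construction, the case $\tilde C=C_j$ is exactly $K_X\cdot\pi_*(C_j)\ge 0$ by hypothesis (1), and any other $\tilde C$ meets both the $E_i$ and $C_j$ non-negatively (being distinct from them), so the identity produces a non-negative sum. Bigness then follows from nef plus $K_X^2>0$ by Riemann--Roch on surfaces.

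For ampleness under the strict hypotheses I would apply Nakai--Moishezon. For curves of the form $\pi_*(C_j)$, strict (1) suffices. For any other irreducible $C\subset X$, the strict transform $\tilde C\ne E_i$ maps to a curve of positive degree in $\bP^2$, so Bezout forces $\tilde C$ to meet some visible curve. If $\tilde C\cdot C_j>0$ for some $j$, the strict inequality $-\gamma_j>0$ closes the case. Otherwise $\tilde C$ meets only $E_i$'s, and the needed strict bound $b_i-\alpha_i>0$ for such an $i$ should follow by reexamining the linear system with strict $\gamma_{j'}<0$, combined with the fact that within each connected block of the $E$-dual graph the inverse of the intersection matrix has strictly negative entries (a standard strengthening for negative-definite matrices with non-positive off-diagonal entries), and that every such block must contain some $E_{i_0}$ meeting a $C_{j_0}$, since the full visible graph is connected by the iterated blowup construction starting from the four intersecting lines. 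The step I expect to be the most delicate is making this final connectivity claim rigorous across all log terminal configurations permitted by the setup.
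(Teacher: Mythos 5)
Your proof is correct in substance but runs along a different track from the paper's, and it is worth seeing why the paper's is shorter. You work upstairs on $\wX$, writing $\pi^*K_X\sim\sum_i(b_i-d_i)E_i+\sum_j(-d_j)C_j$, and you then must control the $E_i$-coefficients: non-negativity via Artin's lemma that $(E_iE_j)^{-1}\le 0$ for nefness, and strict positivity (via irreducibility of each connected block of the intersection matrix plus the claim that every connected component of the black subgraph abuts a white vertex) for ampleness. The paper instead pushes the identity down to $X$: since $K_\wX+\wD=f^*(0)=0$, one gets $K_X=\pi_*(-\wD)=\sum(-d_j)\pi_*(C_j)$ outright, because the $E_i$ are contracted and their coefficients simply disappear. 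This makes the entire inverse-matrix analysis, and in particular the ``delicate'' connectivity step you flag, unnecessary; ampleness then only needs the observation that every irreducible curve of $X$ meets $\cup\,\pi(C_j)$, which the paper gets from the fact that the visible curves support an ample divisor (your Bezout argument with $f^*L_k$ proves the same thing). For completeness: the step you leave open does go through. The visible graph is connected and, since some blowup is last, it contains at least one white vertex; hence every connected component of the configuration $\{E_i\}$ is adjacent to some $C_{j_0}$ with $d_{j_0}<0$, so the right-hand side of your block system is nonzero and $M^{-1}$ having strictly negative entries on an irreducible negative-definite block gives $b_i-d_i>0$ throughout. So your argument is complete once that standard fact is cited, but the push-forward to $X$ is the cleaner route.
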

\begin{proof}
  Of course, (1) and (2) are necessary for $K_X$ to be big and
  nef (resp. ample). Condition (3) implies that $K_X$ is an effective
  linear combination of the curves $\pi(C_j)$. Indeed, 
  $K_\wX+\wD = f^*(0)=0$, so $K_X = \pi_*(-\wD) = \sum (-d_j)
  \pi_*(C_j)$. So, $K_X$ intersects any irreducible curve on $X$
  non-negatively. Thus, $K_X$ is nef.

  For ampleness, note that union of visible curves supports an
  effective ample divisor. Thus, any curve on $X$ intersects its
  image, $\cup \pi(C_j)$. Therefore, any irreducible curve on $X$
  intersects $\sum (-d_j)\pi_*(C_j)$ positively, and so $K_X$ is ample by
  Nakai-Moishezon criterion.
\end{proof}

\begin{remark}
  Even if $K_X$ is only big and nef, it is semiample by Abundance
  Theorem in dimension 2, so its canonical model has ample $K_{X\can}$
  and the same square $K_{X\can}^2=K_X^2$. 
\end{remark}

\begin{remark}
  We usually assume that $0\le d_k \le 1$ (so that $\sum d_kL_k$ is a
  ``boundary'' in the standard MMP terminology), or at least that
  $d_k\le 1$ (so that it is a ``sub boundary''). But this is not
  necessary for the above proof.
\end{remark}

The coefficients $d_i$, $d_j$ in the divisor $\wD$ for the visible
curves $E_i,C_j$ are readily computable. The formula is especially
simple in terms of the quantities $(1-d_i)$, which are just the
\emph{log discrepancies} of $(\bP^2, \sum d_kL_k)$: after blowing up
the point of intersection of two curves with log discrepancies
$1-d_1$, $1-d_2$, the new log discrepancy is $1-d_3 =
(1-d_1)+(1-d_2)$. In other words, the log discrepancies add up.

The above will be our essential method for finding new examples in the
case of the empty boundary. For the examples with a nonempty boundary,
we do not contract the strict preimage of the line $L_0$, which we
denote by $\wB_0$.  We no longer include $\wB_0$ in either collections
$\{E_i\}$, $\{C_j\}$. 
We modify the assumption made at the beginning of this Section to
allow $\wB_0^2$ to be non-negative, since it is a ``special'' curve. Let
$B_0$ be the image of $\wB_0$ on $X$. Then we define the discrepancies
for the pair $(X,B_0)$ via

\begin{displaymath}
  \pi^* (K_X+B_0) = K_\wX+\Delta := K_\wX + \wB_0 + \sum b_i E_i.
\end{displaymath}

With this modification, Theorem~\ref{thm:no-bdry} readily extends:

\begin{theorem}\label{thm:bdry}
  Let $C_j$ be the visible $(-1)$-curves on $\wX$. Assume that:
  \begin{enumerate}
  \item For all $C$ in $\{\pi(C_j),B_0\}$ one has $(K_X+B_0)C \ge 0$
    (resp. $(K_X+B_0)C > 0$).
  \item $(K_X+B_0)^2 > 0$.
  \item There exist four rational numbers $d_0,d_1,d_2,d_3$ with
  $\sum d_k=3$ such that the coefficients of $C_j$ in the formula below
  are all $d_j\le 0$ (resp. all $d_j<0$):
  \begin{displaymath}
    K_\wX + \wD := f^*(K_{\bP^2} + \sum d_k L_k) = K_\wX + d_0\wB_0 + \sum d_i
    E_i + \sum d_j C_j
  \end{displaymath}
  In addition, assume that $d_0\le 1$ (resp. $d_0<1$). 
  \end{enumerate}
  Then the divisor $K_X+B_0$ is big and nef (resp. ample).  
\end{theorem}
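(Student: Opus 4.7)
The plan is to follow the proof of Theorem~\ref{thm:no-bdry} almost verbatim, with $K_X$ replaced by $K_X+B_0$ and with $\wB_0$ treated as an additional visible curve that survives the contraction $\pi$. The only genuinely new feature is that the coefficient of $B_0$ in the effective representation of $K_X+B_0$ must have the right sign, which is exactly what the extra hypothesis $d_0\le 1$ (resp.\ $d_0<1$) ensures.

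First I would exploit $\sum d_k=3$, which forces $K_{\bP^2}+\sum d_k L_k\equiv 0$ and hence $K_\wX+\wD\equiv 0$ on $\wX$. Rewriting $K_\wX+\wB_0 = \wB_0-\wD$ and pushing forward under $\pi$, which contracts exactly the $E_i$'s, yields
\[
K_X+B_0 \;=\; (1-d_0)\,B_0 \;+\; \sum_j (-d_j)\,\pi_*(C_j).
\]
Hypothesis~(3) (including the new assumption $d_0\le 1$) then says precisely that this expresses $K_X+B_0$ as an effective $\bQ$-combination of $B_0$ and the $\pi(C_j)$, with strictly positive coefficients in the resp.\ case.

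Next I would establish nefness by case analysis on an irreducible curve $\Gamma\subset X$. If $\Gamma$ is $B_0$ or some $\pi(C_j)$, hypothesis~(1) gives $(K_X+B_0)\cdot\Gamma\ge 0$ directly. Otherwise $\Gamma$ is distinct from every component appearing in the effective representation above, so each intersection $\Gamma\cdot B_0,\ \Gamma\cdot\pi_*(C_j)$ is automatically non-negative, and the representation forces $(K_X+B_0)\cdot\Gamma\ge 0$. Combined with~(2), this yields big and nef. For ampleness under the strict versions I would upgrade via Nakai-Moishezon exactly as in Theorem~\ref{thm:no-bdry}: the union of non-contracted visible curves $B_0\cup\bigcup_j \pi(C_j)$ supports an ample divisor on $X$ (for instance the pushforward of $f^*\sum_k L_k$), so every irreducible $\Gamma$ meets at least one of these components with strict positivity, and the strict positivity of all coefficients in the effective representation then gives $(K_X+B_0)\cdot\Gamma>0$.

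I do not anticipate any real obstacle; the argument is essentially a mechanical adaptation of the empty-boundary case. The one conceptual point worth checking is that keeping $\wB_0$ uncontracted remains compatible with the ``supports an ample divisor'' step, but enlarging the set of non-contracted visible curves can only strengthen that support. The new assumption $d_0\le 1$ (resp.\ $d_0<1$) plays exactly one role: it keeps the coefficient $1-d_0$ of $B_0$ non-negative (resp.\ strictly positive) in the effective representation, which is precisely what is needed to plug $B_0$ into the case analysis used to verify nefness and to deduce positivity on the remaining curves.
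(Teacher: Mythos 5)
Your proof is correct and follows essentially the same route as the paper: both reduce to the identity $K_X+B_0=(1-d_0)B_0+\sum_j(-d_j)\pi_*(C_j)$ obtained by pushing forward $K_\wX+\wD\equiv 0$, use hypothesis (1) on the components and effectivity on all other curves for nefness, and upgrade to ampleness via Nakai--Moishezon using that the images of the non-contracted visible curves meet every curve on $X$. The role you assign to $d_0\le 1$ (resp. $d_0<1$) is exactly the one the paper's proof uses.
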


\begin{proof}
  The same proof as in Theorem~\ref{thm:no-bdry} gives that
  $K_X+ d_0B_0$ is an effective (resp. strictly positive) combination
  of the curves $\pi(C_j)$. But then so is
  $K_X+B_0 = K_X+d_0B_0 + (1-d_0)B_0$. If $d_0<1$ then $B_0$ appears
  in this sum with a positive coefficient. The rest of the proof is
  the same.
\end{proof}

We now state without proof some easy formulas.

\begin{lemma}\label{lem:EK-K2}
  The following hold.
  (For the surface without a boundary, omit $B_0$.)
  \begin{enumerate}
  \item $(K_X+B_0) \pi_*(C_j)=(K_\wX+\Delta)C_j = -1 + \wB_0 C_j + \sum_i
    b_i E_iC_j$. 
  \item
    $(K_X+B_0)^2= (K_\wX+\Delta)^2 = K_\wX^2 + K_\wX\Delta +
    (\Delta-\wB_0)\wB_0 - 2.$
    For the surface without a boundary,
    $K_X^2= (K_\wX+\Delta)^2 = K_\wX^2 + K_\wX\Delta.$
  \item $K_{\wX}^2 = 9 - (\text{the number of blowups in } \wX\to\bP^2)$.
  \item $K_\wX E_i= -E_i^2-2$ and $K_\wX \wB_0 = -\wB_0^2-2$.
  \end{enumerate}
\end{lemma}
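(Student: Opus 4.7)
The plan is to derive all four formulas from three standard ingredients: adjunction on smooth rational curves, the projection formula, and the defining identity $\pi^*(K_X+B_0) = K_\wX + \Delta$ with $\Delta = \wB_0 + \sum b_i E_i$ (respectively $\Delta = \sum b_i E_i$ in the unbordered case).

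I would begin with (3) and (4). Each $E_i$ is a smooth rational curve, being part of a log terminal exceptional configuration, and $\wB_0$ is the strict transform of the line $L_0 \subset \bP^2$ under a sequence of blowups at points, hence also smooth rational. Adjunction $K_\wX E + E^2 = 2g(E) - 2 = -2$ then gives (4). For (3), each blowup at a smooth point adds an exceptional $(-1)$-curve via $K_{Y'} = \sigma^* K_Y + E_\sigma$ with $E_\sigma^2 = -1$ and so drops $K^2$ by $1$; starting from $K_{\bP^2}^2 = 9$, after $n$ blowups we obtain $K_\wX^2 = 9 - n$.

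For (1) I would use the projection formula: $\pi_*(C_j) \cdot (K_X + B_0) = C_j \cdot \pi^*(K_X + B_0) = C_j \cdot (K_\wX + \Delta)$. Since each $C_j$ is a smooth rational $(-1)$-curve, adjunction gives $K_\wX C_j = -1$, and expanding $\Delta = \wB_0 + \sum b_i E_i$ produces the claimed formula $-1 + \wB_0 C_j + \sum_i b_i E_i C_j$.

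Finally for (2), expand $(K_X + B_0)^2 = (K_\wX + \Delta)^2 = K_\wX^2 + 2 K_\wX \Delta + \Delta^2$. The essential simplification comes from the defining relations $(K_\wX + \Delta) E_i = 0$ for all $i$, which yield $\Delta (\Delta - \wB_0) = -K_\wX (\Delta - \wB_0)$, hence $\Delta^2 = \Delta \wB_0 - K_\wX \Delta + K_\wX \wB_0$. Substituting this back and using (4) to rewrite $K_\wX \wB_0 = -\wB_0^2 - 2$ collapses the expression into $K_\wX^2 + K_\wX \Delta + (\Delta - \wB_0)\wB_0 - 2$. The boundary-free statement follows by removing $\wB_0$ throughout; the $-2$ term disappears precisely because it originated as the adjunction contribution of $\wB_0$. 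The only mild obstacle is this final bookkeeping: one must track that the constant $-2$ is attached to $\wB_0$ alone, whereas the analogous contributions from the $E_i$ cancel out through the discrepancy equations.
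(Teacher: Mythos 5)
Your proof is correct, and all four computations check out: the projection formula plus adjunction for (1), (3), (4), and the substitution $\Delta^2 = \Delta\wB_0 - K_\wX\Delta + K_\wX\wB_0$ coming from the discrepancy equations $(K_\wX+\Delta)E_i=0$ for (2). The paper explicitly states this lemma without proof (``We now state without proof some easy formulas''), so there is no argument to compare against; yours is the standard verification the authors had in mind, and it correctly isolates the $-2$ as the adjunction contribution of $\wB_0$, explaining why it vanishes in the boundary-free case.
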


\section{Combinatorial game}
\label{sec:game}

As usual, we associate with a configuration of curves on a surface its
dual graph. The vertices are labeled with marks $-E_i^2$ (we call them
\emph{marks} because we use \emph{weights} for a different
purpose). Thus, the initial configuration of lines on $\bP^2$
corresponds to a complete graph on four vertices with marks
$-1, -1, -1, -1$, and we have a graph describing the visible curves on
$\wX$.  To simplify the typography, the $(-1)$-curves $C_j$ are shown
in white with no marks. The exceptional curves $E_i$ are shown in black,
and the marks $2$ are omitted.  The reduced boundary $\wB_0$, if
present, is shown as a crossed vertex.

We call the dual graph of the visible curves on $\wX$ the
\emph{visible graph}.  It can be obtained by performing a series of
insertions in the initial graph on four vertices with given marks
$-1,-1,-1,-1$.  Each instance is an insertion of a new vertex $v_3$
with mark 1 between two vertices $v_1, v_2$, at the same time
increasing the marks of $v_1$ and $v_2$ by 1.

Now we attach a weight to each vertex of the visible graph. First we
choose rational numbers $w_0, w_1, w_2, w_3$ for the four vertices of the
initial graph, called the \emph{initial weights}. We define the
weights for the other vertices inductively in the process of inserting
vertices as follows.  As a new vertex $v_3$ is inserted between two
vertices $v_1, v_2$ with already assigned weights $w(v_1)$ and
$w(v_2)$, we define the weight of $v_3$ to be $w(v_3)=w(v_1)+w(v_2)$.

Indeed, our weights are just the suitably normalized log discrepancies
for the pair $(\bP^2, \sum_{k=0}^3 d_k L_k)$: $w_s = n(1- d_s)$ for
some positive rational number $n$, for all the visible curves. 
We can always rescale $n$ to make $w_s$ integers, if we like.

\begin{lemma}\label{lem:weight-conds}
  In terms of the weights, the conditions on the coefficients
  $d_k,d_0,d_j$  in 
  Theorems~\ref{thm:no-bdry} and \ref{thm:bdry} translate to the
  following:
\begin{enumerate}
\item $d_0+d_1+d_2+d_3=3$ $\iff$ $n=w_0+w_1+w_2+w_3$.
\item $d_j\le 0$ (resp. $d_j<0$) for a visible $(-1)$-curve $C_j$
  $\iff$ the weight $w_j\ge n$ (resp. $w_j>n$) for the corresponding
  white vertex. 
\item In the case with a nonempty boundary, $d_0\le 1$ (resp. $d_0<1$)
  for the curve $\wB_0$ $\iff$ $w_0\ge0$ (resp. $w_0>0$) for the
  corresponding crossed vertex. 
\end{enumerate}  
\end{lemma}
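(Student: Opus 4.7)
The plan is to verify directly that the weights $w_s$ are a positive scalar multiple of the log discrepancies $1-d_s$, and then read off the three equivalences from this single identity.

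First, I would recall the standard blowup formula for log discrepancies on a surface: if $Y \to Z$ is the blowup of the intersection point of two divisors with log discrepancies $\ell_1$ and $\ell_2$ in a log pair, then the new exceptional divisor has log discrepancy $\ell_1 + \ell_2$. Applied to our setup, the pair $(\bP^2, \sum_{k=0}^3 d_k L_k)$ assigns log discrepancies $\ell_k = 1 - d_k$ to the four initial lines, and each subsequent blowup of a visible intersection creates a new visible curve whose log discrepancy is the sum of the two log discrepancies at that intersection. In particular, every visible curve has log discrepancy $\ell_s = 1 - d_s$ where $d_s$ is its coefficient in $\widetilde D$.

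Next I would compare this with the inductive definition of the weights: the initial weights are $w_k$ (for $k=0,1,2,3$) on the four starting vertices, and the rule for an inserted vertex is $w(v_3) = w(v_1) + w(v_2)$. Setting $n := w_0 + w_1 + w_2 + w_3$, part (1) follows at once from
\begin{displaymath}
  \sum_{k=0}^3 w_k \;=\; \sum_{k=0}^3 n(1-d_k) \;=\; n\bigl(4 - \textstyle\sum_k d_k\bigr),
\end{displaymath}
so $\sum d_k = 3 \iff \sum w_k = n$. Choosing the initial weights so that $w_k = n(1-d_k)$ and using the additivity of both the weight rule and the log discrepancy rule by induction on the number of blowups gives $w_s = n(1-d_s)$ for every visible vertex.

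With the formula $w_s = n(1-d_s)$ and $n > 0$ in hand, parts (2) and (3) are then immediate algebraic rearrangements: for a white vertex $C_j$ one has $d_j \le 0 \iff 1-d_j \ge 1 \iff w_j \ge n$ (and strictly analogously), and for the crossed vertex $\widetilde B_0$ one has $d_0 \le 1 \iff 1-d_0 \ge 0 \iff w_0 \ge 0$ (again with the strict version). There is no genuine obstacle here; the only thing to be careful about is keeping track of the sign of $n$, which is guaranteed positive once one assumes, as is natural, that the initial weights are nonnegative and not all zero. Thus the entire lemma reduces to verifying the single additivity identity for log discrepancies under blowup.
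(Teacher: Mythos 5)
Your proof is correct and follows essentially the same route as the paper: identify the weights with the rescaled log discrepancies $w_s = n(1-d_s)$ via the additivity of log discrepancies under blowup (which mirrors the inductive weight rule $w(v_3)=w(v_1)+w(v_2)$), and then read off all three equivalences as algebraic rearrangements using $n>0$. The paper treats this as immediate from exactly that identification, so there is nothing to add.
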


For as long as the weights satisfy these conditions, all
we have to do is this:

\begin{enumerate}
\item Make sure that the configuration of the black vertices $\{E_i\}$
  is log terminal. 
\item Compute the negatives of the discrepancies $b_i$ from the 
  linear system~\ref{eq:discrepancies}, or using the formulas from
  \cite{alexeev1992log-canonical-surface}, or by any other method.
\item Make sure that $K_X\,\pi_*(C_j)\ge0$, $K_X\wB_0>0$ (if $\wB_0$
  is present), and $K_X^2>0$ (resp. $(K_X+B_0)^2>0$) using the
  formulas in Lemma~\ref{lem:EK-K2}.
\end{enumerate}
If all of these are satisfied then we get ourselves an example of a
log canonical surface (which is either $X$ or $X\can$) with ample
(log) canonical divisor.

\medskip
 
The visible graph is homeomorphic to a complete graph on four
vertices, but the edges between the corners may contain many
intermediate vertices.

\begin{definition}
  We call an edge a \emph{Calabi-Yau (or CY) edge} if all the white vertices on
  this edge have weights $n$, that is, they have discrepancies $d_i=0$. 
\end{definition}

The picture below shows two examples of edges that we use. 
  \begin{center}
  \begin{tikzpicture}
    \begin{scope}[every node/.style={draw,color=black,inner
        sep=2pt,fill,circle}] 
      \node[fill=white] (a0) at (0,0) {};
      \node (a1) at (1,0) {};
      \node (a2) at (2,0) {};
      \node (a3) at (3,0) {};
      \node (a4) at (4,0) {};
      \end{scope}
      \draw[very thin] (a0)--(a1);
      \draw[ultra thick] (a1)--(a2) (a3)--(a4);
      \draw[dashed] (a2)--(a3);
  \end{tikzpicture}
  \end{center}
  \begin{center}
  \begin{tikzpicture}
    \begin{scope}[every node/.style={draw,color=black,inner
        sep=2pt,fill,circle}] 
      \node[fill=white] (a0) at (0,0) {};
      \node (a1) at (1,0) {};
      \node (a2) at (2,0) {};
      \node (a3) at (3,0) {};
      \node (a4) at (4,0) {};
      \node (a5) at (5,0) {};
      \node (a6) at (6,0) {};
      \node (a7) at (7,0) {};
      \node[fill=white] (a8) at (8,0) {};
      \end{scope}
      \node[above of=a4, node distance=1em] {3};
      \draw[very thin] (a0)--(a1) (a7)--(a8);
      \draw[ultra thick] (a1)--(a2) (a3)--(a4)--(a5) (a6)--(a7);
      \draw[dashed] (a2)--(a3) (a5)--(a6);
  \end{tikzpicture}
  \end{center}
  In general, if the end vertices have weights $w_1$, $w_2$ then
  vertices in the interior of this chain have weights of the form
  $m_1w_1+m_2w_2$ for some coprime positive integers $m_1,m_2$, and
  the way in which these integers are produced in the sequence of
  blowups is equivalent to the well known in number theory
  Stern-Brocot tree. Thus, every edge is encoded by a sequence of
  positive rational numbers $\{m_{1,i}/m_{2,i}\}$ for the white
  vertices. For the two examples above, the sequences are $\{m/1\}$
  and $\{m/1, 1/m'\}$. For a CY edge, these numbers must satisfy
  the condition $m_{1,i}w_1+m_{2,i}w_2 = n$.



\medskip
The following two lemmas are very easy and are given without proof. 

\begin{lemma}\label{lem:near-CY}   ${}$
  \begin{enumerate}
  \item Suppose that all edges in the visible graph are CY edges.  If
    $B_0\ne0$ then in addition suppose that the weight $w_0=0$.  Then
    one has $K_X\equiv 0$ (resp. $K_X+B_0\equiv 0$). 
  \item Suppose that all edges are CY, except for one where there is a
    unique white vertex of weight $n+1$. If $B_0\ne0$ then in addition
    suppose that the weight $w_0=0$.  Then $K_X \equiv \frac1{n}C$,
    where $C$ is an image of a $(-1)$-curve corresponding to that
    special white vertex (resp. $K_X+B_0\equiv\frac1{n}C$).
  \item In the case with the nonempty boundary, suppose that all edges are CY
    and that the weight of $L_0$ is 1. Then $K_X+B_0\equiv \frac1{n}B_0$
    and $K_X \equiv -\frac{n-1}{n} B_0$.
  \end{enumerate}
\end{lemma}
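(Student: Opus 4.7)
The plan is to exploit the fact that the weight normalization $n=w_0+w_1+w_2+w_3$ is, by Lemma~\ref{lem:weight-conds}(1), exactly the condition $\sum d_k=3$. Since $K_{\bP^2}\equiv -3H$ and each $L_k\equiv H$, this forces
\[
K_{\bP^2}+\textstyle\sum d_k L_k\equiv 0,
\]
and pulling back by $f$ gives the global numerical identity
\[
K_\wX+d_0\wB_0+\sum_i d_i E_i+\sum_j d_j C_j\equiv 0 \quad\text{on }\wX,
\]
with the $d_0\wB_0$ term omitted in the no-boundary case. In every part of the lemma the CY/near-CY hypothesis together with the condition on $w_0$ determines the values of $d_0$ and of all $d_j$ on the white vertices, while the coefficients $d_i$ at the black vertices are unknown. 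But this does not matter: pushing forward by $\pi\colon\wX\to X$ kills every $E_i$, so the uncontrolled $\sum d_i E_i$ term disappears. Thus in each case I simply substitute the known values of $d_0$ and $d_j$ and apply $\pi_*$.

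Concretely, the weight/discrepancy translation is $d_s=1-w_s/n$, so: a CY white vertex (weight $n$) gives $d_j=0$; the special near-CY white vertex (weight $n+1$) gives $d_j=-1/n$; $w_0=0$ gives $d_0=1$; and $w_0=1$ gives $d_0=(n-1)/n$. In Part (1), without boundary the identity reads $K_\wX+\sum d_iE_i\equiv 0$ and pushforward gives $K_X\equiv 0$; with boundary we get $K_\wX+\wB_0+\sum d_iE_i\equiv 0$ and hence $K_X+B_0\equiv 0$. In Part (2), exactly one white vertex contributes $-(1/n)C$, so after pushforward $K_X\equiv (1/n)\pi_*(C)$, or $K_X+B_0\equiv (1/n)\pi_*(C)$ with boundary. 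In Part (3), the identity becomes $K_\wX+\tfrac{n-1}{n}\wB_0+\sum d_iE_i\equiv 0$, which pushes forward to $K_X+\tfrac{n-1}{n}B_0\equiv 0$, i.e. $K_X\equiv -\tfrac{n-1}{n}B_0$ and $K_X+B_0\equiv \tfrac{1}{n}B_0$.

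There is no real obstacle: the argument is bookkeeping built on the log-Calabi--Yau identity on $\bP^2$. The two things worth stating cleanly are the conversion $d_s=1-w_s/n$ in each regime and the observation that the negatives-of-discrepancies $b_i$ solving the Artin--Mumford system \eqref{eq:discrepancies} (which generally differ from the $d_i$) never need to be computed, because anything supported on $\cup E_i$ vanishes under $\pi_*$. This is what makes the lemma so clean and justifies its statement without proof in the text.
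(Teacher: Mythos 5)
Your proof is correct, and it is essentially the argument the paper has in mind: the lemma is stated there without proof precisely because it follows from the same identity $K_\wX+\wD=f^*(K_{\bP^2}+\sum d_kL_k)\equiv 0$ and pushforward by $\pi$ (killing the $E_i$) that is used in the proof of Theorem~\ref{thm:no-bdry}. Your explicit bookkeeping of $d_s=1-w_s/n$ in each case, and the remark that the $b_i$ never need to be computed, faithfully fills in what the authors left to the reader.
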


\begin{lemma}\label{lem:delta-eps-computed}
  In the last case (3) of Lemma~\ref{lem:near-CY}, the invariant
  $\delta_1$ defined in \eqref{def:epsilon-delta} equals
  $\delta_1(X,B_0)=\frac1{n}$.  In all cases with nonempty boundary,
  one has $\epsilon_1(X,B_0) = -2 + \sum b_i E_i\wB_0$.
\end{lemma}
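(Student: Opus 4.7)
The plan is to verify both identities by direct pullback-and-projection-formula arguments using the structural data from Section~\ref{sec:method}.

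First I would dispatch $\epsilon_1(X,B_0) = -2 + \sum b_i E_i\wB_0$. Since the reduced boundary $\bry = B_0$ has a single irreducible component, the minimum in Definition~\ref{def:epsilon-delta}(1) collapses to the single intersection number $(K_X+B_0)\cdot B_0$. I would then pull this back to the minimal resolution via the defining relation $\pi^*(K_X+B_0) = K_\wX + \wB_0 + \sum b_i E_i$, use the projection formula (noting $\pi_*\wB_0 = B_0$) to rewrite the intersection as $\pi^*(K_X+B_0)\cdot\wB_0$, and expand. The two summands $K_\wX\wB_0 + \wB_0^2$ then collapse to $-2$ via the adjunction-style identity $K_\wX\wB_0 = -\wB_0^2 - 2$ of Lemma~\ref{lem:EK-K2}(4), leaving exactly $-2 + \sum_i b_i E_i\wB_0$.

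Next, for $\delta_1(X,B_0) = \frac1n$ in case (3) of Lemma~\ref{lem:near-CY}, the whole argument hinges on the numerical equivalence $K_X+B_0\equiv\frac1n B_0$ already established there. Subtracting $tB_0$ from both sides gives $K_X+B_0-tB_0\equiv\bigl(\tfrac1n-t\bigr)B_0$. Because $K_X+B_0$ is ample by construction, the numerically equivalent divisor $\tfrac1n B_0$ is ample, and in particular $B_0^2>0$. Hence $(\tfrac1n-t)B_0$ is big precisely when $\tfrac1n-t>0$, so the threshold at which bigness fails is exactly $t=\tfrac1n$, which is the value demanded by Definition~\ref{def:epsilon-delta}(3).

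I do not foresee a genuine obstacle here: both assertions are essentially bookkeeping once the discrepancy equation and Lemma~\ref{lem:near-CY}(3) are in hand. The only minor subtlety is to resist the temptation to compute $\wB_0^2$ directly on the $\epsilon_1$ side and instead eliminate it through the adjunction identity, so that $\wB_0^2$ cancels out and does not appear in the final formula.
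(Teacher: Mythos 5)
Your proof is correct and is exactly the bookkeeping the paper has in mind (the lemma is stated there without proof). The only slip is the claim that $K_X+B_0$ is ``ample by construction'': in the relevant examples (e.g.\ the $1/462$ surfaces) it is only big and nef, and in characteristic $2$ not even ample, but your argument only needs $B_0^2>0$, which already follows from the construction's requirement $(K_X+B_0)^2=\tfrac{1}{n^2}B_0^2>0$.
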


Our main strategy for finding interesting examples is this: start with
a CY situation, i.e., case (1) of Lemma~\ref{lem:near-CY}, and then go
just one step up, to get in the situation of cases (2) or (3). The
(log) canonical divisor is then guaranteed to be quite small.

\begin{remark}
  The surfaces $X$ appearing in case (1) of Lemma~\ref{lem:near-CY}
  with empty boundary satisfy $K_X\equiv 0$ and $H^1(\cO_X)=0$, and
  are sometimes called log Enriques surfaces. Our construction with
  weights provides a huge supply of such surfaces.
\end{remark}

\begin{remark}
  Another convenient way to compute $(K_X+B)^2$, resp. $K_X^2$, is the
  following. If $K_X+B = \frac1{n}C$ then certainly
  $(K_X+B)^2 = \frac1{n} (K_X+B) C$. So one has to maximize the sum of
  the weights $n$ and to minimize $(K_X+B)C$.
\end{remark}

\section{Simplest weights $(0,1,1,1)$ and $(1,1,1,1)$}
\label{sec:weights1}

Figure~\ref{fig:60-462} gives the smallest volumes that can be achieved
playing the combinatorial game with $B\ne0$ and weights $(0,1,1,1)$,
and with $B=0$ and weights $(1,1,1,1)$.


\begin{figure}[!h]
  \includegraphics{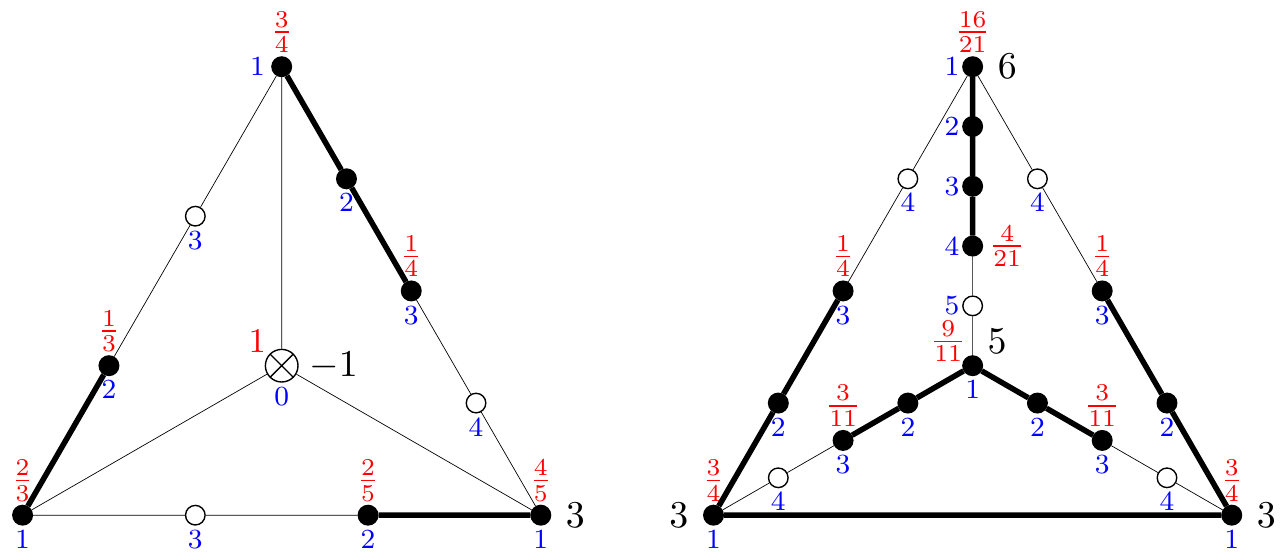}
  \caption{Surfaces with $(K_X+B)^2=\frac1{60}$ and
    $K_X^2=\frac1{462}$}
  \label{fig:60-462}
\end{figure}

\begin{notation}
  The large numbers are marks, when not equal to 1 or 2. The blue
  small numbers underneath are the weights, and the red fractional
  numbers on top are $b_i$, the negatives of discrepancies.
\end{notation}

For the first pair $(X,B)$ one has
$\epsilon_1=1-\frac13-\frac14-\frac15 = \frac{13}{60}$.  There is an
alternative choice of weights $(1,3,4,5)$ in this case, for which this
pair fits into the case (3) of Lemma~\ref{lem:near-CY}, i.e. all edges
are CY. Then $\delta_1=\frac1{13}$ by
Lemma~\ref{lem:delta-eps-computed}, and
$(K+B)^2 = \frac{13}{60}\cdot\frac1{13} = \frac1{60}$. In fact,
another description for this pair is $(X,B) = (\bP(3,4,5), D_{13})$
where $D_{13}$ is a degree 13 weighted hypersurface, and this is
exactly Koll\'ar's example from \cite{kollar2013moduli-varieties}. One
has $\rho(X)=\rank\Pic X=1$, and $K_X+B$, $B$, and $-K_X$ are ample.

For the second pair we can also choose the weights $(1-2\epsilon, 1,
1+\epsilon, 1+\epsilon)$, $0<\epsilon<\frac12$, for which
\eqref{thm:no-bdry}, \eqref{lem:weight-conds}
give that $K_X$ is ample and not just big
and nef.

\section{Example with nonempty boundary: $1/{462}$}
\label{sec:with-boundary}

Figure~\ref{fig:462s} shows two non-isomorphic visible graphs producing
pairs $(X,B)$ of the smallest volume with nonempty boundary that we
were able to find by our method.

\begin{figure}[!h]
  \includegraphics{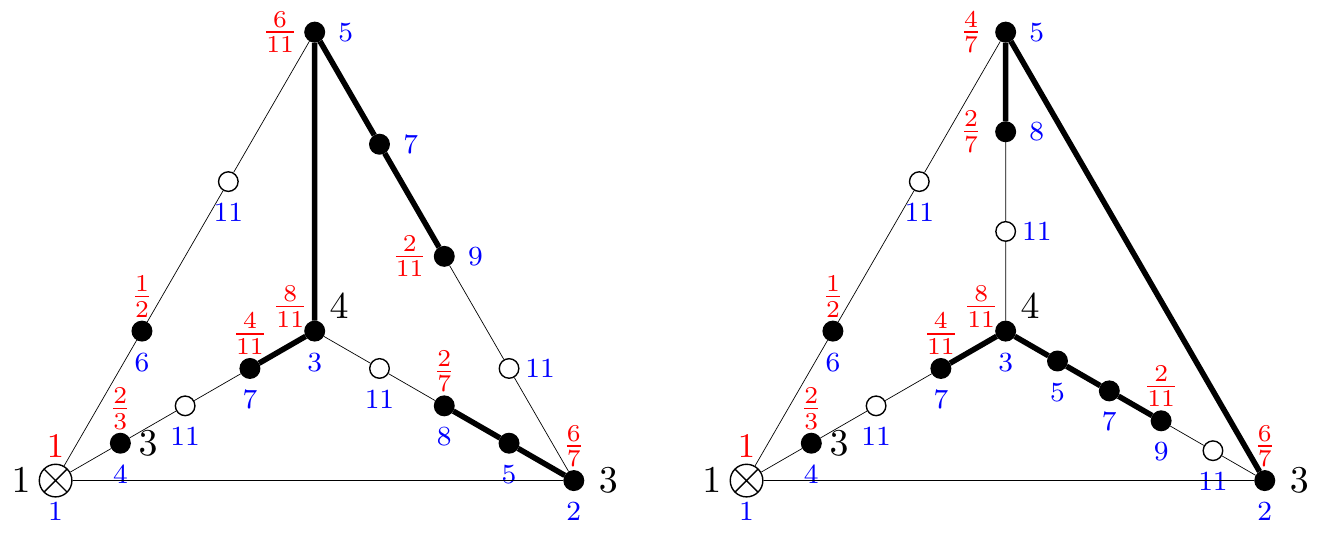} 
  \caption{Surfaces with $(K_X+B)^2=\frac1{462}=\frac1{11\cdot 42}$ and $B\ne0$}
  \label{fig:462s}
\end{figure}

The winning weights are $(1,2,3,5)$. Both pairs achieve the minimal
possible value of
$\epsilon_1 = 1 - \frac12 - \frac13 - \frac17 = \frac1{42}$. On the
other hand, one has $K_X+B \equiv \frac{1}{11}B$ and
$\delta_1=\frac1{11}$ by Lemma~\ref{lem:delta-eps-computed}. Thus, one
has $(K_X+B)^2 = \epsilon_1 \delta_1$.  The divisor $K_X+B$ is big and
nef but there are no weights for which \eqref{thm:no-bdry},
\eqref{lem:weight-conds} or a variation of them show that $K_X+B$ is
ample. The canonical model has ample $K_{X\can}+B\can$ and
$(K_{X\can}+B\can)^2 = (K_X+B)^2 = \frac1{462}$.

\begin{notation}
  We use Figure~\ref{fig:462s} for an alternative labeling of the
  curves, as follows.  We denote the strict preimages of the four
  lines by $\wL^w$, where the superscript $w=1,2,3,5$ is the (blue)
  weight of the corresponding vertex.  Similarly, we denote one of the
  remaining curves
  by $\wF_w^{i,j}$ if its vertex lies on the edge
  between $\wL^i,\wL^j$ and has weight $w$.  In the same vein, we call
  the initial lines in the plane $L^w$ and the intersection points
  $P^{i,j} = L^i\cap L^j$.

  Finally, we use this labeling for the standard orthogonal basis of
  $\Pic\wX$ consisting of the pullback $H$ of a line from $\bP^2$ and
  the full preimages of the $(-1)$-curves from the intermediate
  blowups $\wX\to\bP^2$.  Thus, in $\Pic\wX$ we have
  $\wL^1 = H - F^{1,3}_4 - F^{1,5}_6$,
  $\wF^{1,3}_4 = F^{1,3}_4 - F^{1,3}_7 - F^{1,3}_{11}$,
  $\wF^{1,3}_7 = F^{1,3}_7 - F^{1,3}_{11}$,
  $\wF^{1,3}_{11} = F^{1,3}_{11}$, etc.
\end{notation}

\begin{theorem}\label{thm:is_K+B_ample}
  The two visible graphs of Figure~\ref{fig:462s} describe the same
  surface $X$.  If $\chr k\ne2$ then $K_X+B$ is ample, $\rho(X)=2$,
  and $X=X\can$ has 4 singularities. If $\chr k=2$ then $K_X+B$ is
  big, nef, but not ample, and it contracts a $(-2)$-curve. One has
  $\rho(X\can)=~1$, and $X\can$ has 5 singularities, the last one a
  simple $A_1$.
\end{theorem}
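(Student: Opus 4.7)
First, I would verify that each visible graph in Figure~\ref{fig:462s} indeed produces a valid pair $(X, B_0)$ with big and nef $K_X + B_0$ by applying Theorem~\ref{thm:bdry} with weights $(w_0, w_1, w_2, w_3) = (1, 2, 3, 5)$ and $n = 11$: every white vertex has weight $\geq 11$ and $w_0 = 1 > 0$, so the hypotheses of Lemma~\ref{lem:weight-conds} are met. Every edge is Calabi--Yau except for the single white vertex of weight $12$, and Lemma~\ref{lem:near-CY}(3) (with the ``one step up'' variation of the main strategy) gives $K_X + B_0 \equiv \tfrac{1}{11} B_0$. To show that the two graphs describe the same surface $X$, I would exhibit an explicit birational modification between the two minimal resolutions---a short sequence of contractions and blowups of $(-1)$-curves carrying one visible graph to the other. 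Alternatively, one compares the numerical data (singularity types, intersection matrix, position of $B_0$) of the two log canonical models and invokes the uniqueness of $X\can$ as $\mathrm{Proj}\bigoplus_n H^0(n(K_X + B_0))$.

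Second, I would upgrade big and nef to ample when $\chr k \ne 2$ via the Nakai--Moishezon criterion. Since $K_X + B_0 \equiv \tfrac{1}{11} B_0$, it suffices to check (i) $(K_X + B_0)^2 = \tfrac{1}{462} > 0$, which is automatic, and (ii) $B_0 \cdot C > 0$ for every irreducible $C \ne B_0$ on $X$. Condition (ii) is equivalent to the assertion that every irreducible curve on $\widetilde X$ not contained in $\bigcup E_i$ meets $\widetilde B_0$. This is a finite explicit check on $\widetilde X$: one rules out the existence of any strict transform disjoint from $\widetilde B_0$ and from every visible curve meeting $\widetilde B_0$, using the fact that in $\chr k \ne 2$ the four lines in $\bP^2$ are in honest general position. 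The Picard rank then follows from $\rho(\widetilde X) = 1 + \#(\text{blowups})$ and $\rho(X) = \rho(\widetilde X) - \#\{E_i\}$, yielding $\rho(X) = 2$; the four connected components of the black subgraph contract to exactly the four log terminal singularities, and ampleness forces $X = X\can$.

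Finally, the characteristic-$2$ case hinges on the classical fact that four lines in $\bP^2$ over a field of characteristic $2$ always have the three diagonals of the complete quadrilateral concurrent at a single point, rather than forming a nondegenerate diagonal triangle. I expect this forced incidence to drop the self-intersection of a specific rational curve $\widetilde D$ on $\widetilde X$---most plausibly the strict transform of one of these diagonals or of an auxiliary conic through blown-up points---to $\widetilde D^2 = -2$ in characteristic $2$, while it stays $\geq -1$ otherwise. One then verifies that $\widetilde D$ is disjoint from both $\widetilde B_0$ and every $E_i$, so its image $C^* = \pi(\widetilde D) \subset X$ is a smooth $(-2)$-curve with $(K_X + B_0) \cdot C^* = \tfrac{1}{11} B_0 \cdot C^* = 0$. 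This breaks ampleness, and contracting $C^*$ yields $X\can$ with one additional $A_1$ singularity (total five) and $\rho(X\can) = \rho(X) - 1 = 1$. The principal obstacle is identifying $\widetilde D$ explicitly and checking its disjointness from the visible curves; this requires tracing the blow-up sequence of each graph and pinpointing the incidence that holds only when $\chr k = 2$.
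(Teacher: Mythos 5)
Your outline gets the routine parts right (the weight bookkeeping with $(1,2,3,5)$ and $n=11$, the relation $K_X+B_0\equiv\frac1{11}B_0$, the Picard number count, and the general shape ``an extra $(-2)$-curve disjoint from $\wB_0$ and the $E_i$ appears exactly in characteristic $2$''), but the two steps that carry all the content are missing or misidentified. For $\chr k\ne2$ you reduce ampleness to ruling out an irreducible curve on $\wX$ meeting $\pi^*(K_X+B_0)$ in zero, and then call this ``a finite explicit check'' following from the lines being in general position. It is not finite as stated: the offending curve could a priori be the strict transform of a plane curve of any degree, and general position of four lines holds in every characteristic, so it cannot be the source of the dichotomy. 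The paper's argument is precisely the device that makes the check finite: since $\rho(X)=2$ there is at most one such irreducible curve; writing $\wD=dH-\sum m^{ij}_w F^{ij}_w$ and imposing $\wD\cdot C=0$ for the visible curves effectively supporting $\pi^*(K_X+B_0)$ and $\wD\cdot C\ge0$ for the rest pins the class down to a single ray, and the genus formula then forces $\wD$ to be a smooth rational $(-2)$-curve of degree $3$ with prescribed multiplicities. Without this step you have no handle on which curve to look for.

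Your proposed mechanism for the characteristic-$2$ degeneration --- concurrency of the diagonals of the complete quadrilateral --- is the wrong incidence. The diagonal through $P^{1,5}$ and $P^{2,3}$ does appear, but only as the extra visible $(-1)$-curve distinguishing the two graphs; this is also how the paper proves the two graphs give the same surface, more directly than your appeal to uniqueness of the canonical model. The actual $(-2)$-curve is the strict preimage of a \emph{cuspidal cubic} with a cusp at $P^{1,3}$ tangent to $L^3$, a flex at $P^{2,5}$ along $L^2$, and a tangency to $L^5$ at $P^{1,5}$; its existence translates, via the additive group law $\bG_a$ on the smooth locus of a cuspidal cubic, into the system $3P^{2,5}=P^{2,5}+2P^{1,5}=0$ with $P^{2,5}\ne P^{1,5}$, solvable iff $\chr k=2$. (The second presentation leads instead to a quartic with $A_2A_3$ or $A_2A_4$ singularities and a hyperflex, excluded in characteristic $0$ by Wall's classification.) So the dichotomy comes from the arithmetic of cubics and quartics with prescribed singular and flex data, not from a Fano-type collinearity of diagonal points, and your proposal would need to be substantially redirected to close this gap.
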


\begin{proof}

  The second graph has an extra $(-1)$-curve $F^{1,5}_{11}$ not
  present in the first graph. It is easy to see that with respect
  to the first graph it is simply
  the strict preimage of the line in $\bP^2$ connecting the points
  $P^{1,5}$ and $P^{2,3}$. Thus, the surfaces are the same but
  different curves are illuminated as being visible.

  Let us work with the first representation.  Suppose that there
  exists another, not visible curve $\wD$ such that
  $\wD\cdot\pi^*K_X=0$, which is then contracted by a linear system
  $|N\pi^* K_X|$ for $N\gg0$. Since $\rho(X)=2$, there could only be
  one such irreducible curve. We write
  \begin{math}
    \wD = d H - \sum m^{ij}_w F^{ij}_w.
  \end{math}
  The divisor $\wD$ intersects by zero the curves $\wL^1$,
  $\wF^{1,3}_4$, $\wF^{1,5}_6$, $\wL^2$, $\wF^{2,3}_5$, $\wF^{2,3}_8$
  since the full pullback $\pi^* K_X=\frac1{11}\pi^* B_0$ is a
  strictly positive combination of these curves. Also, $\wD$
  intersects non-negatively all the other visible curves.  This gives
  an explicit set of identities and inequalities. One checks that the
  only solution~is
  \begin{displaymath}
    \wD = c \big[ 3H - (2F^{1,3}_4 + F^{1,3}_7 + F^{1,3}_{11} )
    - (F^{1,5}_6 + F^{1,5}_{11}) 
    - (F^{2,5}_7 + F^{2,5}_9 + F^{2,5}_{11}) \big].
  \end{displaymath}
  Then $\wD K_\wX=0$ and $\wD^2=-2c^2$. It follows from the genus formula
  that $c=1$, and $\wD$ is a smooth rational $(-2)$-curve. It must be
  a strict preimage of a cubic curve $D$ in $\bP^2$ which has:
  \begin{enumerate}
  \item a cusp at $P^{1,3}$ with the tangent direction $L^3$,
  \item a flex at $P^{2,5}$ with the tangent direction $L^2$,
  \item a tangent at $P^{1,5}$ to the line $L^5$. 
  \end{enumerate}
  Thus, $D$ is a cuspidal cubic, and the set of points of
  $D\setminus P^{1,3}$ has the structure of the additive group
  $\bG_a$. We see that the cubic $D$ with the above properties exists
  if and only if the system of equations
  $3P^{2,5}=P^{2,5} + 2P^{1,5}=0$ has a solution in the base field $k$
  with $P^{2,5}\ne P^{1,5}$.  This is possible iff $\chr k=2$; then
  $P^{2,5}=0$ and $P^{1,5}$ is any other smooth point.  This completes
  the proof.

  A second proof using the alternative presentation of surface $X$
  works in characteristics 2 and 0, and by extension in all but
  finitely many other positive characteristics.  The second visible
  graph of Figure~\ref{fig:462s} leads to a smooth rational
  $(-2)$-curve
  \begin{displaymath}
    \wD = 4H - (2F^{1,3}_4 + F^{1,3}_7 + F^{1,3}_{11} )
    - (2F^{1,5}_6 + 2F^{1,5}_{11}) 
    - (F^{2,3}_5 + F^{2,3}_7 + F^{2,3}_9 + F^{2,3}_{11}).
  \end{displaymath}
  It must be then a strict preimage of a quartic curve $D$ on $\bP^2$
  that has:
  \begin{enumerate}
  \item an $A_2$-singularity (a cusp) at $P^{1,3}$ with the
    tangent line $L^3$,
  \item an $A_3$ (a tachnode) or $A_4$-singularity at
    $P^{1,5}$ with the tangent line $L^5$,
  \item a hyperflex at $P^{2,3}$ with the tangent direction $L^2$,
    i.e. $L^2\cap D$ is a smooth point of $D$ and the intersection is
    of multiplicity $4$. 
  \end{enumerate}
  If $\chr k=0$ then such quartic curves do not exist. Indeed,
  \cite[Table 2]{wall1995geometry-quartic} shows that irreducible
  quartic curves in characteristic 0 with $A_2A_3$ or $A_2A_4$
  singularities do not have any hyperflexes.
  Since the property of $K_X+B$ being ample is open, the same is true
  in all but finitely many prime characteristics.
  On the other hand, if $\chr k=2$ then there exists a unique such
  curve (with $A_2A_3$), which can be concluded from the normal forms of
  quartics given in \cite{wall1995quartic-curves}. Explicitly, the
  equation of $D$ can be taken to be $f=x^4 + x^2yz + x^2y^2 + y^2z^2$
  and the lines are $L^1=(x)$, $L^2=(y+z)$, $L^3=(x+z)$, $L^5=(y)$.
\end{proof}

\begin{remark}
  Combining the two presentations of surface $X$ in the proof, we see
  that a quartic with the above configuration of singularities and
  tangent lines does not exist in prime characteristics $p\ne2$.
\end{remark}

\begin{remark}\label{lem:ints_w_visible}
  The intersections of $\wD$ with the visible curves are zero except
  for the following:
  \begin{enumerate}
  \item For the first graph,
    $\wD\wF^{1,3}_{11}=\wD\wF^{1,5}_{11}=\wD\wF^{2,3}_{11}=1$. 
  \item For the second graph, $\wD\wF^{1,5}_{11}=2$ and
    $\wD\wF^{1,3}_{11}=\wD\wF^{2,3}_{11}=1$.
  \end{enumerate}
\end{remark}

\section{Example with empty boundary: $1/{\num{48 983}}$}
\label{sec:empty-boundary}

There exist at least four visible graphs that produce surfaces $X$
without a boundary and
$K_X^2 = \frac1{\num{48983}} = \frac1{11\cdot 61\cdot 73}$.  Three of
them share the same list of singularities; the list is different for
the fourth graph.  The first two graphs can be obtained directly by
inserting 10 vertices into the edge from $\wL^1$ to $\wL^2$, i.e. by
blowing up the surfaces $\wX$ of Figure~\ref{fig:462s} above the point
$P^{1,2}$ 10 times.  We show one of these surfaces in
Figure~\ref{fig:48983}. It leads to a surface with $\rho(X)=2$ and
three singularities.

The fourth graph describes a surface $X'$ with
$\rho(X')=3$ which has four singularities: the singularity with the
minimal resolution $[2,4,2,2,2]$ of Figure~\ref{fig:48983} is replaced
by two singularities $[2,6]$ and $[2,3,2,2]$.

\begin{figure}[!h]
  \includegraphics{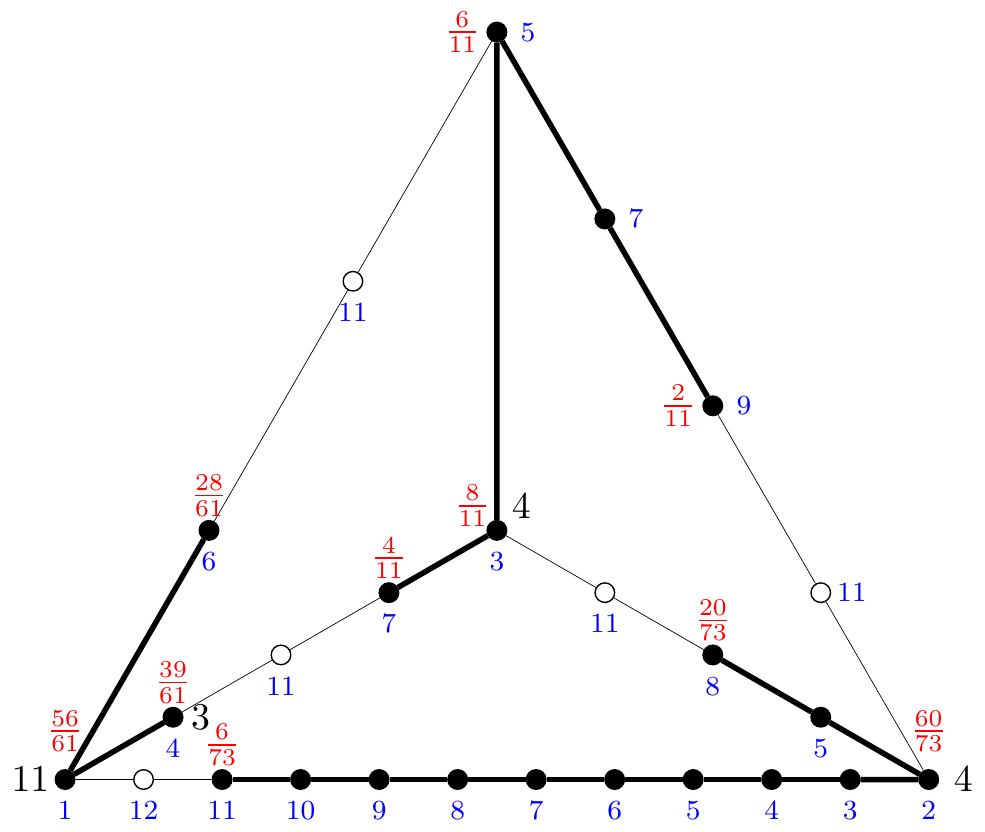}
  \caption{Surface with $K_X^2=\frac1{\num{48983}}=\frac1{11\cdot 
      61\cdot 73}$}
  \label{fig:48983}
\end{figure}

The set of winning weights in these cases is again $(1,2,3,5)$.  Since
$\num{48983} > 42^2$, these examples show that $K^2(\cS_1)$ is achieved
when the boundary is empty, that is,
$K^2_1(\cS_1) > K^2(\cS_1) = K^2(\cS_0)$.


Similarly to the previous case, the divisor $K_X$ is big and nef but
there are no weights for which \eqref{thm:no-bdry},
\eqref{lem:weight-conds} or a variation of them show that $K_X$ is
ample. But the canonical model $X\can$ has ample canonical class and
$K_{X\can}^2=K_X^2 = \frac1{\num{48983}}$.

\begin{theorem}\label{thm:is_K_ample}
  The three distinct visible graphs describe the same surface $X$. The
  fourth graph describes a different surface $X'$ which however has the
  same canonical model since there is a crepant blow down $X'\to X$
  contracting the image of a $(-1)$-curve from $\wX'$. 
  If $\chr k\ne2$ then $K_X$ is ample, $\rho(X)=2$,
  and $X=X\can$ has 3 singularities. If $\chr k=2$ then $K_X$ is
  big, nef, but not ample, and it contracts a $(-2)$-curve;
  $\rho(X\can)=~1$, and $X\can$ has 4 singularities, the last one a
  simple $A_1$.
\end{theorem}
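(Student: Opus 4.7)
The proof plan parallels Theorem~\ref{thm:is_K+B_ample} and proceeds in three steps: identifying three of the four graphs as presenting the same surface $X$; showing that the fourth graph gives a surface $X'$ admitting a crepant contraction $X'\to X$; and analyzing ampleness of $K_X$ in a characteristic-dependent way.

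For the first step, in each pair of the three graphs I would identify the extra visible curves present in one graph but not in another as strict preimages of specific plane curves through the points $P^{i,j}$ with the appropriate tangencies or higher-order contacts. Writing each candidate curve in the basis $\Pic\wX = \bZ H \oplus \bigoplus \bZ F^{i,j}_w$ and checking its self-intersection and its intersection numbers against the target visible graph reduces the identification to a finite computation, exactly along the lines of the first paragraph of the proof of Theorem~\ref{thm:is_K+B_ample}.

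For the second step, the numerical splitting of the chain $[2,4,2,2,2]$ on $\wX$ into $[2,6]$ and $[2,3,2,2]$ on $\wX'$ pins down the exact position of the extra visible $(-1)$-curve $C$ on $\wX'$ between the two new chains: it is attached to the $-6$ endpoint of $[2,6]$ and to the $-2$ endpoint of $[2,3,2,2]$, and two successive $(-1)$-curve contractions starting with $C$ recover $\wX$. Applying Lemma~\ref{lem:EK-K2}(1) to $C$ and the discrepancies computed from \eqref{eq:discrepancies} yields $K_{X'}\cdot\pi_*(C) = 0$, so the contraction $X'\to X$ of $\pi_*(C)$ is indeed crepant, and the two canonical models coincide.

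For the third step, assume there exists an irreducible curve $\wD$ on $\wX$, distinct from the visible curves, with $\wD\cdot\pi^*K_X = 0$. Since $\rho(X)=2$, at most one such curve exists. Writing $\wD = dH - \sum m^{i,j}_w F^{i,j}_w$ and imposing $\wD\cdot C = 0$ on each visible curve $C$ in the positive support of $\pi^*K_X$, together with $\wD\cdot C \ge 0$ on the remaining visible curves, produces a linear system whose essentially unique solution determines $\wD$. As in Theorem~\ref{thm:is_K+B_ample}, $\wD K_\wX = 0$ combined with the genus formula forces $\wD^2 = -2$, so $\wD$ is a smooth rational $(-2)$-curve, necessarily the strict preimage of a plane curve of degree $d$ with cusps, flexes, and tangencies at the $P^{i,j}$ prescribed by the multiplicities $m^{i,j}_w$. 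By the same reduction to a system on a cuspidal rational curve with additive group structure $\bG_a$, existence of this plane curve is equivalent to $\chr k = 2$, and can be established in that characteristic either by writing down an explicit defining equation or via the normal forms of \cite{wall1995geometry-quartic, wall1995quartic-curves}. When $\wD$ exists, it is contracted by $|N\pi^*K_X|$ for $N\gg 0$ to the extra $A_1$-singularity of $X\can$, giving $\rho(X\can)=1$ and four singularities; otherwise $K_X$ is ample, $X = X\can$ has $\rho = 2$, and Figure~\ref{fig:48983} directly reads off the three singularities. The main obstacle is the explicit identification of $\wD$ and the parameter count for the singular plane curve carrying it, which is exactly where the characteristic-2 condition enters and which, as in Theorem~\ref{thm:is_K+B_ample}, may require either a direct construction or published normal forms for plane curves with the prescribed local analytic types and contact conditions.
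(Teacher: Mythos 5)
Your proposal is correct and follows essentially the same route as the paper: identify the extra (in)visible curves for the alternative graphs as strict preimages of explicit plane curves, verify $K_{X'}\cdot\pi_*(C)=0$ for the crepant contraction via Lemma~\ref{lem:EK-K2}(1), and reduce the ampleness question to the existence of an invisible $(-2)$-curve $\wD$ with $\wD\cdot\pi^*K_X=0$, which exists iff $\chr k=2$. The only difference is one of economy: the paper observes that the ten newly inserted curves over $P^{1,2}$ lie in the effective support of $\pi^*K_X$, so the linear conditions on $\wD$ literally coincide with those already solved in Theorem~\ref{thm:is_K+B_ample} and no new computation is needed, whereas you propose re-solving the system from scratch.
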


\begin{proof}
  The proof of the equivalence for the first three graphs is the same
  as in Theorem~\ref{thm:is_K+B_ample}. Since the surface in the end
  is unique we do not draw the other two graphs but indicate the two
  invisible curves that have to be added to Figure~\ref{fig:48983} to
  obtain them. These are the strict preimages of a line in $\bP^2$
  joining $P^{1,5}$ and $P^{2,3}$, and of a conic passing through
  $P^{1,2}$ generically, through $P^{1,5}$ with the tangent $L^5$, and
  through $P^{2,3}$ with the tangent $L^3$.

  Similarly, the surface $X'$ described by a fourth graph, which we do
  not draw, has an invisible curve $C$, a strict preimage of a line
  through $P^{1,2}$ and $P^{3,5}$ such that $C\cdot
  \pi^*K_{X'}=0$. Contracting this curve gives the same surface as
  in Figure~\ref{fig:48983}.

  From now on, we work with the surface $X$ described by
  Figure~\ref{fig:48983}.  Again, if there exists an invisible curve
  $\wD$ with $\wD\cdot \pi^*K_X=0$ then it must have zero intersection
  with the curves effectively supporting $\pi^*K_X$, which include the
  10 newly inserted curves $\wF^{12}_w$. Thus, the inequalities in
  this case are reduced to those in Theorem~\ref{thm:is_K+B_ample},
  and the rest of the proof is the same.
\end{proof}

As in Remark~\ref{lem:ints_w_visible}, the intersections of $D$ with
the visible curves are zero except for those listed there. 



\section{Connection with the algebraic Montgomery-Yang problem} 
\label{sec:montgomery-yang}

The algebraic Montgomery-Yang problem
\cite[Conj. 30]{kollar2008is-there} asks whether there exists a surface
with $\rho(S)=1$ and $\pi_1(S\setminus\Sing S)=1$ that has
\emph{four} quotient singularities. Conjecturally, the answer is no.
All the possibilities for such surfaces were ruled out except when
$K_S$ is ample, see \cite{hwang2012construction-singular}.

It is amusing to note that if the characteristic 2 surface $S=X\can$
with 4 singularities which we constructed in
Theorem~\ref{thm:is_K_ample} existed in characteristic 0 then it
would provide a counterexample to the above conjecture.

Let $U_s\ni s$ be a small neighborhood of a singular point $s$ and
$L_s=U_s\setminus s$. Then the three singularities whose determinants
$m=11,61,73$ are coprime to $2$ are quotient singularities and one has
$\pi_1^{\rm alg}(L_s) = \bZ_m$. For the fourth singularity obtained by
contracting a $(-2)$-curve one has $\pi_1^{\rm alg}(L_s) = 1$ in
characteristic 2. One can prove that
$\pi_1^{\rm alg}(S\setminus\Sing S)=1$ by the usual methods, by
considering the images of the $(-1)$-curves $C_j$ connecting the
singularities and using van Kampen theorem, which still
holds for the \'etale fundamental group in positive characteristic by
\cite[IX, Th.5.1]{grothendieck1971sga1-2}, cf. 
\cite{mathoverflow110511}.

In any case, this surface also violates the orbifold
Bogomolov-Miyaoka-Yau inequality $c_1^2(S) \le 3e_{\rm orb}(S)$, for
which one may see the discussion in \cite[\S1]{kollar2008is-there}.
Namely, it violates its corollary, the inequality
\begin{displaymath}
  \sum_{s\in\Sing S} \left(1 - \frac1{|\pi_1(U_s\setminus s)|}\right)
  \le 3, 
\end{displaymath}
if one literally replaces $\pi_1$ with $\pi_1^{\rm alg}$, or if one
replaces $|\pi_1(U_s\setminus s)|$ with $m_i$. Thus,
this configuration of singularities can not appear in characteristic 0
if $\rho(X)=1$. 

\section{The case of Picard rank 1}
\label{sec:pic1}

In part because of the connection with the algebraic Montgomery-Yang
problem, it is of interest to know the minimal volume for surfaces
with the additional condition $\rho(X)=1$, in characteristic 0. For
surfaces without the boundary, the best we were able to find is
$K_X^2 = \frac1{6351} = \frac1{3\cdot 29\cdot 73}$.

There are three possible graphs for the visible curves in this case,
and one of them describes a surface that can be obtained by
contracting two curves on the hypersurface in a weighted projective
space from Example~\ref{ex:urzua}. Thus, it is one of the surfaces
$S^*(2,2,4,10)$ studied in \cite[Sec.43]{kollar2008is-there}.

The other two graphs are not of this type, and we give one of them in
Figure~\ref{fig:6351}. However, all three graphs share the same list
of singularities. Indeed, the argument we gave in the proof of
Theorems~\ref{thm:is_K+B_ample}, \ref{thm:is_K_ample} shows that the
three visible graphs describe the same surface.

\begin{figure}[!h]
  \includegraphics{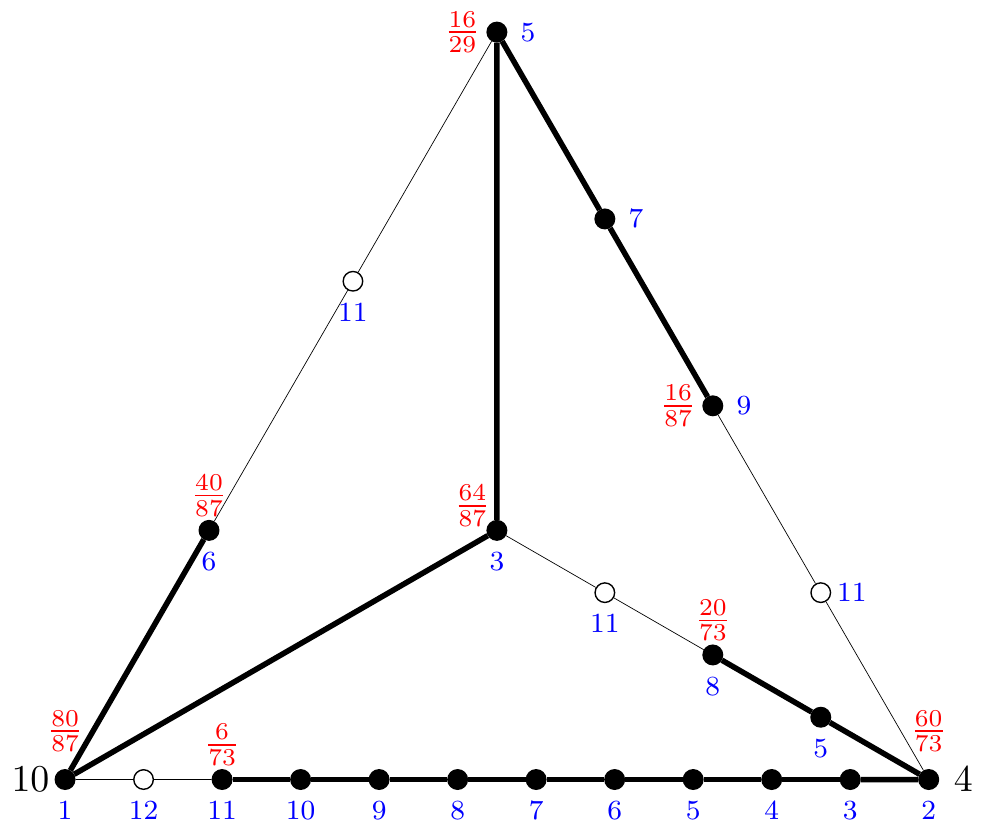}
  \caption{Surface with $K_X^2=\frac1{6351}=\frac1{3\cdot 29\cdot 73}$
  and $\rho(X)=1$}
  \label{fig:6351}
\end{figure}

\begin{remark}
  Comparing Figures~\ref{fig:48983} and \ref{fig:6351}, one can see
  that our surface with the minimal volume can be obtained from the
  surface $S^*(2,2,4,10)$ by a weighted blowup at one point. 
\end{remark}

In \cite{hwang2012construction-singular} Hwang and Keum construct,
for any $a_1,a_2,a_3,a_4\ge2$, a surface $T=T(a_1,a_2,a_3,a_4)$ with
$\rho(T)=1$ obtained by blowing up the 4-line configuration; it has
two cyclic singularities corresponding to the chains
$[2*(a_4-1), a_3, a_1, 2*(a_2-1)]$ and
$[2*(a_3-1), a_2, a_4, 2*(a_1-1)]$. In particular, these surfaces
include all the surfaces $S^*(a_1,a_2,a_3,a_4)$ with
$\gcd(w_1,w_3)=\gcd(w_2,w_4)=1$ by \cite{urzua2016characterization-Kollar}.

\begin{theorem}\label{thm:special-surfaces}
  Let
  \begin{eqnarray*}
    A &=& a_1a_2a_3a_4 -a_2a_3a_4 -a_1a_3a_4 -a_1a_2a_4 -a_1a_2a_3 + \\
    && a_1a_2 +a_2a_3 +a_3a_4 +a_1a_4 -a_1 -a_2 -a_3 -a_4 +3 \\
    B_1 &=& a_1a_2a_3a_4 -a_1a_3a_4 -a_1a_2a_3 +a_2a_3 +a_1a_4 -a_1 -a_3 +1\\
    B_2 &=& a_1a_2a_3a_4 -a_2a_3a_4 -a_1a_2a_4 +a_1a_2 +a_3a_4 -a_2 -a_4 +1
  \end{eqnarray*}
Then the following is true:
\begin{enumerate}
\item The surface $T(a_1,a_2,a_3,a_4)$ has ample canonical class $K_T$
  iff $A>0$. 
\item The determinants of the two singularities are $B_1$ and $B_2$. 
\item $K_T^2 = A^2 / B_1B_2.$
\item The minimum $K_T^2 = \frac1{6351}$ is achieved for $(a_i)=(2,2,4,10)$,
  up to a cyclic rotation. 
\end{enumerate}
\end{theorem}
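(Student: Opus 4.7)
The plan is to carry out the computation directly on the Hwang--Keum construction, either via the weighted framework of Section~\ref{sec:game} or by explicit intersection theory on $\tilde T$. For the latter, let $\tilde T \to \bP^2$ be the sequence of blowups producing $T$, with two chains of exceptional curves of marks $[2(a_4-1), a_3, a_1, 2(a_2-1)]$ and $[2(a_3-1), a_2, a_4, 2(a_1-1)]$. For part (2), the determinants $B_1, B_2$ of the two cyclic quotient singularities are the Hirzebruch--Jung continuants of these chains. The long $[2,2,\ldots,2]$-subchains simplify the continuant recursion $P_k = e_k P_{k-1} - P_{k-2}$ (they contribute affine pieces), and a direct induction isolating the middle entries $a_3, a_1$ (resp.\ $a_2, a_4$) should yield the stated polynomial formulas for $B_1, B_2$.

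For part (3), the computation of $K_T^2$ reduces via Lemma~\ref{lem:EK-K2} to $K_{\tilde T}^2 + K_{\tilde T}\cdot \Delta$, where $\Delta = \sum b_i E_i$ is the pullback correction and $K_{\tilde T}^2 = 9 - N$ with $N$ the total number of blowups. The $b_i$ are computed from the linear system~\eqref{eq:discrepancies} on each chain, for which one may use the closed formulas of~\cite{alexeev1992log-canonical-surface}. Alternatively, if the weights framework applies (with one distinguished edge in case (2) of Lemma~\ref{lem:near-CY} and $K_T \equiv \tfrac{1}{n}\pi_*(C)$), the calculation shortcuts to $K_T^2 = \tfrac{1}{n}\,K_T\cdot \pi_*(C)$. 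In either case, matching the resulting rational function of $(a_i)$ against $A^2/(B_1 B_2)$ is a polynomial identity in four variables. Part (1) then follows since $\rho(T) = 1$ forces $K_T$ to be a rational multiple of the ample generator of $\Pic(T)\otimes\bQ$; the sign analysis (using $B_1, B_2 > 0$) reduces ampleness to $A > 0$.

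For part (4), I would bound the minimization combinatorially. The cyclic symmetry $(a_1,a_2,a_3,a_4) \mapsto (a_2,a_3,a_4,a_1)$ preserves $A$ and exchanges $B_1 \leftrightarrow B_2$, hence preserves the ratio $A^2/(B_1 B_2)$ and allows us to normalize the search space. Since $B_1, B_2$ each grow like the full product $a_1 a_2 a_3 a_4$, an asymptotic estimate should show that $A^2/(B_1 B_2) > 1/6351$ as soon as some $a_i$ exceeds an explicit threshold $N_0$. A direct (or computer-assisted) enumeration for $a_i \in \{2,\ldots,N_0\}$ then confirms that $(a_1,a_2,a_3,a_4) = (2,2,4,10)$ (up to cyclic rotation) is the unique minimizer, giving $A = 1$, $B_1 = 87$, $B_2 = 73$, and $K_T^2 = 1/6351$.

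The main obstacle I anticipate is the polynomial bookkeeping in (2) and (3): the long $[2,\ldots,2]$-subchains make the continuant and discrepancy expressions lengthy, and verifying the four-variable identity $K_T^2 = A^2/(B_1 B_2)$ is unenlightening without careful organization. Part (4) is conceptually easy, but certifying the search radius needs a nontrivial growth bound because $A^2/(B_1 B_2)$ is of order one and not monotone in the individual $a_i$, so there are many near-minimal candidates with $A = 1$ or $A = 2$ to rule out before the enumeration terminates.
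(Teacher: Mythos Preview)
Your plan for parts (1)--(3) is essentially the paper's own argument: the paper computes $\pi^*K_T\cdot C$ for the visible $(-1)$-curve via Lemma~\ref{lem:EK-K2}(1) and finds it is $A$ times positive factors, cites \cite[Lemma~2.4]{hwang2012construction-singular} for the determinants $B_1,B_2$, and obtains $K_T^2$ by a direct computation with Lemma~\ref{lem:EK-K2}(2). So there is no real difference there.

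For part (4) your approach diverges from the paper's and is less efficient, though not wrong in principle. You propose to bound the search space by an asymptotic estimate showing $A^2/(B_1B_2)>1/6351$ once some $a_i$ exceeds a threshold $N_0$, and then enumerate. The paper instead observes (after the substitution $x_i=a_i-1$) that each partial derivative $\partial(A/B_j)/\partial x_i$ is nonnegative on the region $A>0$, $x_i\ge1$. Hence $A^2/(B_1B_2)=(A/B_1)(A/B_2)$ is monotone increasing in every variable on that region, and it suffices to test only the \emph{minimal} integer tuples with $A>0$, namely those lying just above the hypersurface $A=0$. The paper lists the rational ``critical'' tuples with $A=0$ and then the couple of dozen integer tuples immediately above them, reading off the minimum directly.

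In particular, your remark that $A^2/(B_1B_2)$ is ``not monotone in the individual $a_i$'' is actually false on the relevant region $A>0$; this monotonicity is precisely what makes the paper's argument clean and what eliminates the need for your asymptotic threshold $N_0$. Your route would succeed, but the bookkeeping you anticipate in certifying $N_0$ is exactly what the monotonicity observation sidesteps.
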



\begin{proof}
  (1) We compute $\pi^*K_X\cdot C$ for a $(-1)$-curve $C$ by
  Lemma~\ref{lem:EK-K2}(1) and find that it is a product of $A$
  and some positive terms.

  (2) is \cite[Lemma 2.4]{hwang2012construction-singular}.

  (3) follows by a direct computation, applying
  Lemma~\ref{lem:EK-K2}(2).
  
  (4) It is somewhat more convenient to use the variables
  $x_i=a_i-1$. Then
  \begin{displaymath}
    \frac{A}{x_1x_2x_3x_4} =
    1- \frac1{x_1}\frac1{x_3}\left( 1+\frac1{x_2}+\frac1{x_4} \right)
    - \frac1{x_2}\frac1{x_4}\left( 1+\frac1{x_1}+\frac1{x_3} \right).
  \end{displaymath}
  One easily checks that for $j=1,2$ the partial derivatives
  $\partial (A/B_j) / \partial x_i\ge0$ when $A>0$ and
  $x_i\ge1$. Thus, it is sufficient to check the minimal collections
  $(a_i)$ for which $A>0$, meaning: for any other
  collection $(a_i')$ with $A>0$ one has $a_i'\ge a_i$ $\forall i$.

  We first find the ``critical'' collections, for which $A=0$. These
  are $(3,3,3,3)$, $(2,8,3,3)$, $(2,3,\frac{11}{2},3)$, $(2,3,3,8)$,
  $(2,2,4,9)$, $(2,2,5,6)$, $(2,2,6,5)$, $(2,2,9,4)$.
  
  Then, modulo rotational symmetry, the smallest collections $(a_i)$
  for which $A>0$ are
 $(4, 3, 3, 3)$,
 $(2, 9, 3, 3)$,
 $(2, 8, 4, 3)$,
 $(2, 8, 3, 4)$,
  $(2, 3, 6, 3)$,
 $(2, 4, 3, 8)$,
 $(2, 3, 4, 8)$,
 $(2, 3, 3, 9)$,
 $(3, 2, 4, 9)$,
 $(2, 3, 4, 9)$,
 $(2, 2, 5, 9)$,
 $(2, 2, 4, 10)$,
 $(3, 2, 5, 6)$,
 $(2, 3, 5, 6)$, \linebreak
 $(2, 2, 6, 6)$, 
 $(2, 2, 5, 7)$,
 $(3, 2, 6, 5)$,
 $(2, 3, 6, 5)$,
 $(2, 2, 7, 5)$,
 $(2, 2, 6, 6)$,
 $(2, 3, 9, 4)$, \linebreak
 $(2, 2, 10, 4)$,
 $(2, 2, 9, 5)$.
Among these, the minimal value $A^2/B_1B_2 = \frac1{6351}$ is
  achieved for $(a_1,a_2,a_3,a_4)=(2,2,4,10)$.
\end{proof}

\bigskip

For the surfaces with boundary, we found a pair with
$(K_X+B_0)^2 = \frac1{78}$. The marks of the corners are 1, 3, $2'$,
$2''$ ($B_0$ goes first, we use the notation $2',2''$ to distinguish
the two vertices with the same marks), and the curves along the edges
have marks 1--3, 1--2--2--1--$2'$, 1--2--1--$2''$, 3--$2'$,
3--1--2--2--2--$2''$, $2'$--$2''$. The weights that work for
Lemma~\ref{lem:near-CY}(3) are $(1,1,2,3)$, $n=7$, $\delta_1=\frac17$,
and $\epsilon_1=1-\frac12-\frac13-\frac1{13}=\frac7{78}$.

\section{Why only four lines?}
\label{sec:four-lines}

It may seem naive and insufficient in search of examples to reduce
oneself only to the simplest of line arrangements: four lines in the
plane. Why not consider some more interesting configurations, e.g. a
Fano or anti-Fano configuration of 7 lines or Segre (resp. dual Segre)
configuration of 12 (resp. 9) lines? And why lines and not conics or curves
of higher degree?  In fact, there are good ad hoc reasons for this:

(1) For all examples of log surfaces arising from 4 lines, one
\emph{apriori} has $K_X^2 \le (K_{\bP^2} + \sum_{i=0}^3L_i)^2 =
1$.
Similarly, for $d$ lines in general position an upper bound is
$(d-3)^2$.  For special line arrangements the upper bound is smaller
but it starts with 2 for a special configuration of 5 lines.  Although
this is an \emph{upper} and not a lower bound, it shows how hard one
has to work to achieve a minimum. Indeed, for $d\ge7$ lines in general
position it is easy to show that
\begin{displaymath}
K_X^2 \ge 9-\frac{d(d-1)}{2} + \frac{d-4}{d-2}(d-4)d = \frac{d^2}{2}
-\frac{11d}{2} +13 +\frac{8}{d-2} \ge \frac35,  
\end{displaymath}
with the minimum achieved by blowing up all of the $\frac{d(d-1)}{2}$
intersection points of the $d$ lines.


(2) The combinatorial game, similar to the one we described in
Section~\ref{sec:game}, becomes very hard to play for more than 4
lines. E.g., for 5 lines the condition on the weights becomes
$w_0+\dotsb +w_4=2n$, and there are very few interesting examples. A
similar thing happens if one works with conics instead of lines. 

(3) We also note that constructions of many interesting log
surfaces with ample $K_X$ can be reduced to
blowups of the same 4-line configuration in $\bP^2$, even when the
initial definition is different, see e.g. \cite{kollar2008is-there,
  hwang2011maximum-number, hwang2012construction-singular,
  urzua2016characterization-Kollar}.
The surfaces of \cite{hwang2012construction-singular}
that use conics and cubics all have bigger volumes.

\section{Lower bound for $K^2(\cS_1) = K^2(\cS_0)$.}
\label{sec:k2-lower-bound}

In this section, we spell out the explicit effective lower bound for
$K^2(\cS_0)$ provided by Theorem 4.8 of
\cite{alexeev2004bounding-singular}. In our present notations, it says
the following:
\begin{displaymath}
  K_X^2 \ge \frac1{\ell \cdot (2\ell)^N}, 
  \quad\text{where } 
  N=128\ell^5+4\ell
  \text{ and }
  \ell = \ulc 1 / {\delta_1(\cS_1)} \urc.
\end{displaymath}
Together with Koll\'ar's bound $\delta_1(\cS_1)\ge \frac1{42}$, this
gives
\begin{displaymath}
  K_X^2 \ge \frac 1 {42 \cdot 84^{128\cdot 42^5 + 4\cdot 42}} \approx
  10^{ - 3.22 \cdot 10^{10}}. 
\end{displaymath}
Certainly this is not a realistic bound. Many improvements can be made
to the estimates in \cite{alexeev2004bounding-singular} but they would
not cardinally change the estimate without introducing some cardinally
new methods. The true lower bound for $K^2(\cS_0)$ may be closer to
Koll\'ar's conjectural bound for
$K^2(\cS_2) = \frac1{(42\cdot 43)^2}$.  Indeed, we dare to think that
it could be close, or equal to
$\frac1{\num{48983}} \approx \frac{67} {(42\cdot 43)^2}$ that we give
here.


\def\cprime{$'$}
\providecommand{\bysame}{\leavevmode\hbox to3em{\hrulefill}\thinspace}
\providecommand{\MR}{\relax\ifhmode\unskip\space\fi MR }
\providecommand{\MRhref}[2]{%
  \href{http://www.ams.org/mathscinet-getitem?mr=#1}{#2}
}
\providecommand{\href}[2]{#2}

\end{document}